\documentclass[12pt,draftcls,onecolumn]{IEEEtran}                                                          

\IEEEoverridecommandlockouts                              


\usepackage{multirow}
\usepackage{wrapfig}
\usepackage{amsmath}
\usepackage[dvipsnames,table,xcdraw]{xcolor}
\usepackage{bbm}
\usepackage{diagbox}
\usepackage{varioref}
\usepackage{hyperref}
                                
\usepackage{amssymb} 
\usepackage{rotating}
\usepackage{graphicx}
\usepackage{tikz}
\usepackage{caption}
\usepackage{adjustbox}
\usepackage{subcaption}
\usepackage[normalem]{ulem}

\usepackage{amsthm}
\DeclareUnicodeCharacter{00A0}{~}
\usepackage{algorithm}
\usepackage{algpseudocode}
\algnewcommand{\Initialize}[1]{%
	\State \textbf{Initialization:}
	\Statex {\raggedright #1}
}
\newtheorem{assumption}{Assumption}
\newtheorem{theorem}{Theorem}
\newtheorem{lemma}{Lemma}
\newtheorem{proposition}{Proposition}
\newtheorem{definition}{Definition}

\newtheorem{remark}{Remark}

\definecolor{ao}{rgb}{0.0, 0.5, 0.0}

\newcommand{\fy}[1]{{\color{black}#1}}
\newcommand{\yqr}[1]{{\color{black}#1}}

\usepackage[textsize=small]{todonotes}

\newcommand{\yq}[1]{{\color{black}#1}}

\title{\LARGE \bf
A \fy{Randomized Zeroth-Order} Hierarchical Framework for Heterogeneous Federated Learning
}


\author{Yuyang Qiu, Kibaek Kim, and Farzad Yousefian 
\thanks{This work  is supported in part by the U.S. Department of Energy, Office of Science, Advanced Scientific Computing Research, under Grant DE-SC0025570 and Contract DE-AC02-06CH11357, and in part by the Office of Naval Research under Grant N00014-22-1-2757.}
\thanks{Yuyang Qiu and Farzad Yousefian are with the Department of Industrial and \yqr{S}ystems Engineering, Rutgers University, New Brunswick, NJ 08901, USA. {\tt\small \{yuyang.qiu, farzad.yousefian\}@rutgers.edu}}
\thanks{Kibaek Kim is with the Mathematics and Computer Science Division, Argonne National Laboratory, Lemont, IL 60439, USA. {\tt\small kimk@anl.gov}}%
}

\begin{document}
\maketitle
\thispagestyle{empty}
\pagestyle{empty}
\sloppy

\begin{abstract}
Heterogeneity in \fy{f}ederated \fy{l}earning (FL) is a critical and challenging aspect that significantly impacts model performance and convergence. In this paper, we propose a novel framework by formulating heterogeneous FL as \fy{a} hierarchical optimization problem. This new framework captures both local and global training process\yqr{es} through a bilevel formulation and is capable of \fy{the following}: (i) addressing client heterogeneity \fy{through a personalized learning framework}; (ii) capturing \yqr{the} pre-training process on \yqr{the server} side; (iii) updating \yqr{the} global model through nonstandard aggregation; (iv) allowing for nonidentical local steps\fy{; and (v)} capturing clients' local constraints. We design and analyze an implicit zeroth-order FL method (ZO-HFL), \yqr{equipped} with nonasymptotic convergence guarantees for both the server-agent and the individual client-agents, and asymptotic guarantees for both the server-agent and client-agents in an almost sure sense. Notably, our \fy{method} does not \fy{rely on} standard assumptions in heterogeneous FL\fy{,} such as \fy{the} bounded \fy{gradient} dissimilarity condition. We implement our method on image classification tasks and compare with other methods under \fy{different} heterogeneous settings.
\end{abstract}


\section{Introduction}
Federated learning (FL) \cite{mcmahan2017communication}, as a decentralized, communication-efficient learning paradigm, enables participating clients to obtain a generalizable model \yqr{without sharing raw data}. One of the primary challenges in FL is the presence of client heterogeneity. Effectively addressing heterogeneity is crucial for ensuring robust performance, fairness, and generalization of the global model across all participating clients. Heterogeneity impacts FL in several critical ways as follows: \fy{(i)} \textit{Data heterogeneity:} When \yqr{clients' data is not necessarily independent and identically distributed, commonly referred to as non-iid}, traditional aggregation methods may lead to biased global models that underperform on certain client data \cite{li2020federated}. \fy{(ii)} \textit{System heterogeneity}: Devices involved in FL often have varying computational power and network conditions, making synchronous aggregation challenging and potentially leading to straggler effects \cite{bonawitz2019towards}. \fy{(iii)} \textit{Model heterogeneity}: Different clients may require tailored models, especially in personalized FL scenarios \cite{smith2017federated}.

Various strategies have been proposed to tackle heterogeneity in FL. Regularization schemes \cite{li2020federated} and variance control techniques \cite{karimireddy2020scaffold}, have been employed to handle non-iid data and system inconsistencies. Some FL methods are equipped with asynchronous updates \cite{FedBuff} or nonidentical local steps \cite{wang2020tackling}, \fy{allowing clients} to update and communicate based on local computation and network constraints. Personalized approaches \cite{tan2022towards} such as meta-learning \cite{fallah2020personalized}, clustering \cite{sattler2020clustered}, and model-remapping \cite{lu2024zoopfl} allow adaptation to client-specific data distributions. We note that some existing personalized FL frameworks address heterogeneity through bilevel optimization, such as fairness and robustness approach \cite{fairpersonalization}, sparse personalized approach \cite{liu2023sparse}, and adaptive mixed model approach \cite{deng2020adaptive}.
Additionally, some nonstandard aggregation steps \yqr{on the server side} were proposed, such as server-side momentum \cite{AgnosticFL19}, extrapolation mechanism \cite{jhunjhunwala2023fedexp}, and robust aggregation \cite{pillutla2022robust}. However, \yqr{since the server could also preserve data, and} there seems to be no FL framework that considers training with both \yqr{client and server} data. Therefore, an open problem arises:
{\it Can we design a framework that captures training on clients and server while addressing clients' heterogeneity?} To this end, we propose a novel \fy{modeling} framework to address heterogeneity in FL, while capturing server-side pre-training.

\yqr{Since we consider utilizing data on the server side, we associate the server with a loss function $f_1(x)= \mathbb E [\tilde f_1(x,\xi)]$, where $x$ and $\xi\in \mathcal D $ denote the global model and data samples maintained at the server, respectively. Next, let $h_i(x,y_i)= \mathbb E [\tilde h_i(x,y_i,\zeta_i)]$, for all $i\in[m]$, denote the $i$-th client's local objective function, where $y_i$ and $\zeta_i\in \tilde{\mathcal D}_i $ denote the local model and data samples maintained by the client, respectively. We let $[m]$ denote the set of integers from $1$ to $m$. Then, we define a penalty function $f_2(x,y_i(x))$ that is utilized to penalize the dissimilarity between server and client models, where $y_i(x)= \text{arg} \min_{y_i\in Y_i(x)}  \mathbb E [\tilde h_i(x,y_i,\zeta_{i})]$. For example, one may consider $f_2(x,y_i(x))=\tfrac{\lambda}{2}\|x-y_i(x)\|^2$,  where $\lambda>0$ is a penalty parameter.
Then, we may} consider a distributed hierarchical optimization problem of the form
\begin{align}\label{prob::main}
\begin{split}
&\min_{x\in \mathbb R^n}  \quad \mathbb E [\tilde f_1(x,\xi)]+\tfrac{1}{m}\textstyle\sum_{i=1}^m  f_2(x,y_i(x))\\
& \text{ s.t. }\quad y_i(x) = \text{arg} \min_{y_i\in Y_i(x)}  \mathbb E [\tilde h_i(x,y_i,\zeta_{i})], \  \forall i\in[m],
\end{split}
\end{align}
where the upper level is associated with a server-agent, and the lower level is associated with $m$ client-agents. We define $ f_2(\bullet) \triangleq \frac{1}{m}\sum_{i=1}^m  f_2(\bullet,y_i(\bullet))$ to denote the implicit function, \fy{mitigating} the drift of local clients. $Y_i(x) \subseteq \mathbb R^n$ for all $i\in [m]$ denotes the local constraint sets in the \fy{lower level}.
\fy{We consider a flexible setting} where each client-agent \fy{is locally constrained by} $Y_i(x)$ \fy{enabling} personalization, \fy{e.g., when} $Y_i(x)=\{y_i \fy{\in \mathbb{R}^n \mid} \|y_i-x\|\leq \rho_i\}$ \fy{where $\rho_i$ denotes the $i$th client's local dissimilarity bound}. Our proposed framework not only captures a global model $x$ at server's end, but also allows each client \yqr{$i$} to maintain \yqr{a} personalized local model $y_i$ characterized by \yqr{the} local \yqr{dataset} $\tilde{\mathcal D} _i$, \yqr{making} it particularly suitable for heterogeneous FL scenarios. Throughout, we denote the server's global objective by $f(x)=f_1(x)+f_2(x)$. We note that \fy{problem} \eqref{prob::main} is distinct from standard bilevel FL formulations, such as \fy{those studied in}~\cite{yang2024simfbo,qiu2023zeroth} \fy{where the hierarchical structure arises from the learning task (e.g., hyperparameter optimization), but not the heterogeneous  setting}.

\fy{Our main contributions are summarized as follows.}
(i) We design and analyze a randomized zeroth-order implicit heterogeneous FL method, {\bf ZO-HFL}, for addressing problem \eqref{prob::main}. Notably, our method utilizes nonstandard aggregations and allows nonidentical local steps.  (ii) We provide nonasymptotic guarantees for the setting when the implicit function is nondifferentiable \yq{and} nonconvex. (iii) We also provide almost sure convergence with asymptotic guarantees for both upper-level and lower-level objectives. (iv) We numerically validate our theoretical findings under various settings, and achieve better test accuracy under highly heterogeneous settings.

\noindent {\bf Notation.} Throughout, we let $\|\bullet\|$ \fy{denote} the $\ell_2$ norm, and $x^{\top}y$ \fy{denote} the inner product \yqr{of vectors} $x, y \in \mathbb R^n$. We use $\Pi_C[x]$ to denote the Euclidean projection of point $x$ onto \yqr{the nonempty, closed, and convex} set $C$. \yqr{The nonexpansivity property of the Euclidean projection is as follows.} $\|\Pi_C[x]-\Pi_C[y]\|\leq \|x-y\|$ for all $x,y\in \mathbb R^n$. We let
$\mathbb{B}$ and $\mathbb{S}$ denote the $n$-dimensional unit ball and its
surface, respectively, i.e., $\mathbb{B} = \{u \in \mathbb{R}^n \mid \|u\| \leq
1\}$ and $\mathbb{S} = \{v \in \mathbb{R}^n \mid \|v\| = 1\}$. Throughout, we use $\mathbb{E}[\bullet]$ to denote the expectation of a random variable. We denote the big $O$ notation by $\mathcal{O}\left(\bullet\right)$.
We say a \yqr{continuously differentiable} function $f: C\subseteq \mathbb R^n\to \mathbb R$ defined on a convex set $C$ is $\mu$-strongly convex if \yqr{$f(y)\geq f(x)+\nabla f(x)^{\top}(y-x)+\tfrac{\mu}{2}\|y-x\|^2$} for all $x,y\in C$; and is $L$-smooth if $f(y)\leq f(x)+\nabla f(x)^{\top}(y-x)+\tfrac{L}{2}\|y-x\|^2$, for all $x,y\in C$. 

\section{Assumption and Algorithm Outline}
In this section, we present the main assumptions and outline of the proposed FL algorithm. 


\begin{assumption}\label{assump:main}\em
Let the following assumptions hold.
 
\noindent (i) For all $i \in [m]$, $f_2(\bullet,y_i(\bullet))$ is $L_0^{\text{imp}}$-Lipschitz, $f_2(\bullet,y)$ is $L_{0,x}^{f_2}$-Lipschitz \yqr{for} any $y$, and $f_2(x,\bullet)$ is $L_{0,y}^{f_2}$-Lipschitz 
\yqr{for} any $x$. 

\noindent (ii) For all $i\in [m]$, for any $x$, $h_i(x,\bullet)\triangleq \mathbb{E}_{\zeta_i \in\tilde{\mathcal{D}}_i}[\tilde{h}_i(x,\bullet,\zeta_i) ]$ is $L_{1,y}^{h_i}$-smooth and $\mu_{h_i}$-strongly convex. \fy{For} any $y$, the map $\nabla_y h_i(\bullet,y)$ is Lipschitz continuous with parameter $L_{0,x}^{\nabla h_i}$. Furthermore, $\mathbb E[\nabla_y \tilde h_i(x,y_{i},\zeta_i)\mid y_{i}]=\nabla_y h_i(x,y_i)$ and $\mathbb E[\|\nabla_y h_i(x,y_{i})-\nabla_y \tilde h_i\fy{(x,y_{i},\zeta_i)}\|^2\mid y_{i}]\leq \sigma_i^2$.

\noindent (iii) For any $x$, $f_1(x)$ is $L^{f_1}_{1}$-smooth, $\mathbb E[\nabla \tilde f_1(x,\xi)\mid x]=\nabla f_1(x)$, and $\mathbb E[\|\nabla \tilde f_1(x,\xi)-\nabla f_1(x)\|^2\mid x]\leq \sigma^2$.

\noindent (iv) \yqr{For all $i \in [m]$, for any $x$,} $Y_i(x)$ \yqr{is} closed and convex.
\end{assumption}

\begin{remark}
Note that Assumption \ref{assump:main} does not require standard assumptions made in heterogeneous FL, which is critical in establishing convergence, and is criticized in capturing data heterogeneity \cite{wang2022unreasonable}.
\end{remark}

\yq{The outline of our methods is described as follows. We employ \yqr{the} implicit programming approach \cite{qiu2023zeroth,cui2023complexity} for solving problem \eqref{prob::main}. During local steps, client-agents update their local models by solving the lower-level problem in \eqref{prob::main} with local solver (e.g., projected SGD). Then, after receiving information from clients, the server-agent updates the global model \yqr{for minimizing the global implicit objective} $f(x)=f_1(x)+f_2(x)$.}

A major challenge in hierarchical optimization is that the {\it implicit function is often nondifferentiable and nonconvex}, especially when the lower-level problems are constrained \cite{qiu2023zeroth}. This is shown using an example in the appendix (Fig. \ref{fig:NDNCBiLv}). Therefore, we employ a {\it randomized smoothing} scheme on the implicit function $f_2(\bullet)$. \yqr{Randomized smoothing is rooted in the seminal work
by Steklov~\cite{steklov1}, which has inspired significant progress in both
convex~\cite{DeFarias08,YNS_Automatica12,Duchi12} and nonconvex~\cite{lin2022gradient,nesterov17}
(single-level) optimization. It was later extended to address nonsmooth, nonconvex, and hierarchical problems \cite{qiu2023zeroth,cui2023complexity}.}

\yqr{Given a smoothing parameter $\eta>0$, we define (cf. Lemma~\ref{lem:SphericalSmooth}) the smoothed variant} of $f_2(x)$ as
\begin{align*}
f_2^{\eta}(x)\triangleq \tfrac{1}{m}\textstyle\sum_{i=1}^m\mathbb E_{u\in\mathbb B}[f_2(x+\eta u,y_i(x+\eta u))].
\end{align*}
We also define $f^{\eta}(x)=f_1(x)+f_2^{\eta}(x)$ and $\nabla f_2^{\eta}(x)=\tfrac{1}{m}\sum_{i=1}^m \mathbb E_{v\in\mathbb S}[(f_2(x+\eta v,y_i(x+\eta v))-f_2(x-\eta v,y_i(x-\eta v)))v]$. \yqr{Because} the exact evaluation of $y_i(\bullet)$ is typically intractable \cite{qiu2023zeroth,cui2023complexity}, we use an inexact evaluation $y_{\varepsilon_i}(\bullet)$ \yqr{such that} $\mathbb E[\|y_i(\bullet)-y_{\varepsilon_i}(\bullet)\|^2] \leq \varepsilon_i$. Therefore, we may utilize a stochastic inexact zeroth-order gradient of $f^\eta(x)$, given as
\begin{align*}
\hat g_\varepsilon &=     \nabla \tilde f_1(x,\xi)+\tfrac{1}{m} \textstyle\sum_{i=1}^m \tfrac{n}{2\eta}(f_2(x+\eta v_{i},y_{\varepsilon_{i}}(x+\eta v_i))-f_2(x-\eta v_{i},y_{\varepsilon_{i}}(x-\eta v_i)))v_{i}\yqr{,}
\end{align*}
where $v_{i}\in \eta \mathbb S$.
\yq{Then, we} consider a gradient-based global step at round $r$ given as $\hat x_{r+1}=\hat x_r - \gamma_r \hat g_{\varepsilon,r}$, where $\hat g_{\varepsilon,r}$ denotes the realization of $\hat g_\varepsilon$ at round $r$. The \yqr{outline} of the proposed FL scheme \yqr{is} presented in Algorithms~\ref{alg:FedStack} and~\ref{alg:SGD}.
\begin{algorithm}[htb]                                                                       \caption{{\bf ZO-HFL}}\label{alg:FedStack}
\begin{algorithmic}[1] 
	 \State \textbf{Initialization:} server obtains initial global model $\hat x_0$
	 \For {$r = 0,1, \dots, R-1$}
	 \State server generates $v_{i,r}\in \mathbb S$, and broadcast\yqr{s} $\hat x_r$ and $v_{i,r}$ to clients
	 
	 \For {$i = 1,  \dots,m$}
	 \State client $i$ calls Algorithm \ref{alg:SGD} twice, and obtains $y_{\varepsilon_{i,r}}^+\triangleq y_{\varepsilon_{i,r}}(\hat x_r+\eta v_{i,r})$ and $y_{\varepsilon_{i,r}}^-\triangleq y_{\varepsilon_{i,r}}(\hat x_r-\eta v_{i,r})$, and sends them to server
	 \EndFor
	 \State server generates a random sample $\xi_{r}\in \mathcal D$ and computes $\hat g_{\varepsilon,r}=\nabla \tilde f_1(\hat x_r,\xi_r)+\tfrac{1}{m} \sum_{i=1}^m g_{i,r}^{\eta,\varepsilon}$, where $g_{i,r}^{\eta,\varepsilon}=\tfrac{n}{2\eta}(f_2(\hat x_r+\eta v_{i,r},y_{\varepsilon_{i,r}}^+)-f_2(\hat x_r-\eta v_{i,r},y_{\varepsilon_{i,r}}^-))v_{i,r}$
	 \State server updates \yqr{the global model as} $\hat x_{r+1} = \hat x_r -\gamma_r \hat g_{\varepsilon,r}$
	 \EndFor
	\State return $\hat x_{R}$
	\end{algorithmic}
\end{algorithm}

\begin{algorithm}[htb]                                                                       \caption{Client $i$'s local steps ($i$, $r$, $x$, $H_{i,r}$, $\tilde \gamma_{i,t}$)}\label{alg:SGD}
\begin{algorithmic}[1]
	 \State \textbf{Initialization:} client $i$ choose\yqr{s} initial point $y_{i,0}^{r,\bullet}$
	 \For {$t = 0,1, \dots, H_{i,r}-1$}
	 \State client $i$ generates a random sample $\zeta_{i,t}^{r,\bullet}\in \mathcal{ \tilde D}_i$
	 \State $y_{i,t+1}^{r,\bullet}= \Pi_{Y_i(x)}[y_{i,t}^{r,\bullet}-\tilde \gamma_{i,t} \nabla_y \tilde h_i(x,y_{i,t}^{r,\bullet},\zeta_{i,t}^{r,\bullet})]$
	 \EndFor
	\State return $y_{i,H_{i,r}}^{r,\bullet}$ as $y_{\varepsilon_{i,r}}^\bullet$
	\end{algorithmic}
\end{algorithm}

\begin{remark}
In Alg.~\ref{alg:FedStack}, \yqr{$\hat x_r$ denotes the global model at round $r$, $v_{i,r}$ denotes the random vector generated uniformly on $\mathbb S$,} $y_{\varepsilon_{i,r}}(\bullet)\triangleq y_{i,H_{i,r}}^{r,\bullet}$ denotes an $\varepsilon_{i,r}$-solution \yqr{such that} $\mathbb E[\|y_{i,H_{i,r}}^{r,\bullet}-y_{i,*}^{r,\bullet}\|^2]\leq \varepsilon_{i,r}$, where $y_{i,*}^{r,\bullet}$ denotes the optimal solution of the lower-level problem $\min_{y_i\in Y_i(x)}  \mathbb E [\tilde h_i(\bullet,y_i,\zeta_{i})]$. We let $y_{i,t}^{r,\bullet}$  denote client $i$'s local iterates at round $r$, given $\hat x_r+\eta v_{i,r}$ or $\hat x_r-\eta v_{i,r}$ as the input variable $x$ in Alg.~\ref{alg:SGD} (e.g., if the iterate is $y_{i,t}^{r,+}$, it means Alg. \ref{alg:SGD} receives $x_r+\eta v_{i,r}$ as the input variable $x$). \yqr{We note that in Alg. 2, setting $H_{i,r}=0$ for some clients at some rounds may simulate system heterogeneity effect.}
\end{remark}

\section{Convergence Analysis}
In this section, we analyze the convergence of the proposed scheme. We begin by defining the method's history. 
\begin{definition}\em\label{def:F:Alg}
We first define the history of Algorithm \ref{alg:SGD} at round $r$, for all $i\in [m]$ and $1 \leq t \leq H_{i,r}$. $\mathcal F_{i,t}^{r,\bullet} \triangleq \mathcal F_{i,t-1}^{r,\bullet} \cup \{\zeta_{i,t-1}^{r,\bullet}\}.$
Let $\mathcal F_{i,0}^{r,\bullet}\triangleq \{y_{i,0}^{r,\bullet}\}\cup (\cup_{i=1}^m\cup_{j=0}^r \{v_{i,j}\})\cup \mathcal F_{r}$, \yqr{where $\mathcal{F}_r$ denotes} the history of Algorithm \ref{alg:FedStack} \yqr{ defined next.} \yqr{F}or all $r\geq 1$, $
\mathcal F_r\triangleq (\cup_{i=1}^m\{\mathcal F_{i,H_{i,r-1}}^{r-1,+},\mathcal F_{i,H_{i,r-1}}^{r-1,-}\})\cup \{\xi_{r-1}\},$
where $\mathcal F_{i,H_{i,r}}^{r,+}$ and $\mathcal F_{i,H_{i,r}}^{r,-}$ are defined above. \yqr{Lastly, we} let $\mathcal F_0 \triangleq \{\hat x_0\}\cup (\cup_{i=1}^m\{\mathcal F_{i,H_{i,0}}^{0,+},\mathcal F_{i,H_{i,0}}^{0,-}\})$.
\end{definition}

Next, we \yqr{review} an important result in establishing almost sure convergence of stochastic methods.
\begin{lemma}[Robbins-Siegmund Theorem \cite{robbins1971convergence}]\em\label{lem:robsieg}
For $t=0,1,\dots$, let $X_t$, $Y_t$, $Z_t$, and $\alpha_t$ be nonnegative $\tilde{\mathcal F}_t$-measurable random variables, where $\tilde{\mathcal F}_t\subset \tilde{\mathcal F}_{t+1}$. Suppose the following relations hold: (a) $\mathbb E[Y_{t+1}\mid \tilde{\mathcal F}_t]\leq (1+\alpha_t)Y_t -X_t +Z_t$; (b) $\sum_{t=0}^{\infty} Z_t < \infty$, $\sum_{t=0}^{\infty} \alpha_t < \infty$. Then we have
$\lim_{t\to \infty} Y_t = Y\geq 0, \text{ and }\sum_{t=0}^{\infty} X_t<\infty \text{ \yqr{almost surely.}}$

\end{lemma}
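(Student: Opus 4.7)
The plan is to reduce the statement to an application of Doob's nonnegative supermartingale convergence theorem by constructing, from the four given sequences, a single process that (after a suitable normalization and an additive constant) becomes a nonnegative supermartingale. The hypothesis (a) already reads like a supermartingale inequality for $Y_t$, except for two defects: the multiplicative slack $(1+\alpha_t)$ and the additive slack $Z_t$. The key idea is to absorb the first defect by rescaling with a deterministic product and to absorb the second by subtracting a running sum.

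Concretely, since $\sum_{t=0}^{\infty}\alpha_t<\infty$ and $\alpha_t\geq 0$, the infinite product $\prod_{t=0}^{\infty}(1+\alpha_t)$ converges to a finite positive number, so $\beta_t\triangleq \prod_{s=0}^{t-1}(1+\alpha_s)^{-1}$ is a deterministic, monotonically decreasing sequence in $(0,1]$ with $\beta_t\downarrow \beta_\infty>0$. I would then define
\begin{align*}
M_t \triangleq \beta_t Y_t + \textstyle\sum_{s=0}^{t-1}\beta_{s+1}(X_s - Z_s).
\end{align*}
Using the identity $\beta_{t+1}(1+\alpha_t)=\beta_t$ together with hypothesis (a) and $\tilde{\mathcal F}_t$-measurability of $X_t,Z_t,\alpha_t$, a direct computation yields $\mathbb E[M_{t+1}\mid \tilde{\mathcal F}_t]\leq M_t$. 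Moreover, since $X_s,Z_s,\beta_{s+1}\geq 0$ and $\beta_{s+1}\leq 1$, we have $M_t \geq -\sum_{s=0}^{\infty}Z_s$, which is a.s. finite by hypothesis (b). Hence $M_t+\sum_{s=0}^{\infty}Z_s$ is a nonnegative supermartingale, and by Doob's theorem it converges almost surely to an integrable limit; equivalently, $M_t\to M_\infty$ a.s. with $M_\infty$ finite.

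To finish, I would read off the two conclusions from the defining identity, rewritten as $\sum_{s=0}^{t-1}\beta_{s+1}X_s = M_t-\beta_t Y_t+\sum_{s=0}^{t-1}\beta_{s+1}Z_s$. The right-hand side is bounded above (a.s.) by $M_t+\sum_{s=0}^{\infty}Z_s$, and the left-hand side is nondecreasing in $t$, so $\sum_{s=0}^{\infty}\beta_{s+1}X_s<\infty$ a.s.; the uniform lower bound $\beta_{s+1}\geq \beta_\infty>0$ then upgrades this to $\sum_{s=0}^{\infty}X_s<\infty$ a.s. Returning to the identity, $\beta_t Y_t$ is a sum of three a.s.\ convergent sequences and hence converges a.s.; dividing by $\beta_t\to \beta_\infty>0$ and using $Y_t\geq 0$ yields $Y_t\to Y$ a.s. for some nonnegative limit $Y$.

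The step I expect to be the main obstacle is choosing the right normalization in the definition of $M_t$. Using $\beta_s$ (rather than $\beta_{s+1}$) as the weight attached to $X_s$ and $Z_s$ would break the telescoping that is needed to make $\mathbb E[M_{t+1}\mid \tilde{\mathcal F}_t]$ collapse to $M_t$; likewise, rescaling $Y_t$ by $(1+\alpha_t)$-type factors of the wrong index would leave residual terms that cannot be absorbed into a supermartingale. Once the bookkeeping of the weights is settled, the remainder is essentially a mechanical application of the supermartingale convergence theorem followed by a clean separation into the almost-sure summability of $X_t$ and the almost-sure convergence of $Y_t$.
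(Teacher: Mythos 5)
The paper does not prove this lemma at all: it is quoted as the classical Robbins--Siegmund theorem with a citation to \cite{robbins1971convergence}, so there is no in-paper argument to compare against. Your construction is the standard one from that literature --- rescale by $\beta_t=\prod_{s<t}(1+\alpha_s)^{-1}$ and fold the drift terms into a single supermartingale $M_t$ --- and your verification that $\mathbb E[M_{t+1}\mid\tilde{\mathcal F}_t]\leq M_t$ (using $\beta_{t+1}(1+\alpha_t)=\beta_t$ and the $\tilde{\mathcal F}_t$-measurability of $X_t,Z_t$) is correct, as is the endgame once almost sure convergence of $M_t$ is in hand.

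The genuine gap is the sentence ``Hence $M_t+\sum_{s=0}^{\infty}Z_s$ is a nonnegative supermartingale.'' The random variable $W\triangleq\sum_{s=0}^{\infty}Z_s$ is not $\tilde{\mathcal F}_t$-measurable, so $\mathbb E[M_{t+1}+W\mid\tilde{\mathcal F}_t]=\mathbb E[M_{t+1}\mid\tilde{\mathcal F}_t]+\mathbb E[W\mid\tilde{\mathcal F}_t]\leq M_t+\mathbb E[W\mid\tilde{\mathcal F}_t]$, which is not $\leq M_t+W$ in general; a supermartingale plus a fixed non-adapted random variable need not be a supermartingale, and ``supermartingale bounded below by an a.s.\ finite random variable'' is not by itself enough for Doob's theorem. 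The standard repair uses exactly the bound you already have, namely $M_t\geq -S_t$ with $S_t\triangleq\sum_{s=0}^{t-1}\beta_{s+1}Z_s$ adapted and nondecreasing: for each $a>0$ define the stopping time $T_a\triangleq\inf\{t\colon S_{t+1}>a\}$, note that the stopped process $M_{t\wedge T_a}+a$ is a nonnegative supermartingale, conclude that $M_t$ converges a.s.\ on $\{T_a=\infty\}=\{\sum_s\beta_{s+1}Z_s\leq a\}$, and let $a\to\infty$ to cover the event $\{\sum_s Z_s<\infty\}$, which has probability one by hypothesis (b). (A smaller technical point: to speak of a supermartingale at all you need $\mathbb E[|M_t|]<\infty$, which requires either assuming $\mathbb E[Y_0]<\infty$ or invoking the convergence theorem for nonnegative, not necessarily integrable, supermartingales.) With the localization step inserted, your argument is a complete and correct proof of the lemma.
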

Next, we derive a nonasymptotic error bound for Algorithm~\ref{alg:SGD} that characterizes the inexactness.
\begin{proposition}\label{prop:SGD}\em
Consider Algorithm \ref{alg:SGD}\fy{. Let} Assumption~\ref{assump:main} (ii) hold. Let $y_{i,*}^{r,\bullet}$ denote the optimal solution \fy{to} the lower-level problem $\min_{y_i\in Y_i(\bullet)}  \mathbb E [\tilde h_i(\bullet,y_i,\zeta_{i})]$,  \fy{and} $g_{i,t}^{r,\bullet}\triangleq \nabla_y \tilde h_i(\bullet,y_{i,t}^{r,\bullet},\zeta_{i,t}^{r,\bullet})$\fy{. Then,} the following results hold \yqr{for all $i \in [m]$}.

\noindent (i) Let $\tilde\gamma_{i,t}\triangleq\tfrac{\tilde\gamma_i}{t+\Gamma}\leq \tfrac{\mu_{h_i}}{2(L_{1,y}^{h_i})^2}$, where $\Gamma, \tilde\gamma_i>0$. Then
\begin{align*} 
&\mathbb E[\|y_{i,H_{i,r}}^{r,\bullet}-y_{i,*}^{r,\bullet}\|^2]\leq \tfrac{\max \{\tfrac{2\sigma_i^2\tilde\gamma_i^2}{\mu_{h_i} \tilde\gamma_i-1},\Gamma \|y_{i,0}^{r,\bullet}-y_{i,*}^{r,\bullet}\|^2 \}}{H_{i,r}+\Gamma}.
\end{align*}
\noindent (ii) Let $\{\tilde\gamma_{i,t}\}$ be a nonsummable and square summable sequence. We have
$\lim_{t \to \infty} \|y_{i,t}^{r,\bullet}-y_{i,*}^{r,\bullet}\| = 0,$ \yqr{ almost surely.}

\end{proposition}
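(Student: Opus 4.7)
The plan is to derive a single contractive one-step recursion for $\mathbb{E}[\|y_{i,t+1}^{r,\bullet}-y_{i,*}^{r,\bullet}\|^2]$ that will serve both parts: a clean nonasymptotic bound follows from it by induction, while the almost sure statement follows by fitting it into the Robbins-Siegmund framework (Lemma~\ref{lem:robsieg}).

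To derive the recursion, I would start from the first-order optimality condition for $y_{i,*}^{r,\bullet}$, which yields the fixed-point identity $y_{i,*}^{r,\bullet}=\Pi_{Y_i(x)}[y_{i,*}^{r,\bullet}-\tilde\gamma_{i,t}\nabla_y h_i(x,y_{i,*}^{r,\bullet})]$ for any $\tilde\gamma_{i,t}>0$. Combining this with the projected SGD update and invoking nonexpansivity of $\Pi_{Y_i(x)}[\cdot]$, I obtain
\begin{align*}
\|y_{i,t+1}^{r,\bullet}-y_{i,*}^{r,\bullet}\|^2\leq \|(y_{i,t}^{r,\bullet}-y_{i,*}^{r,\bullet})-\tilde\gamma_{i,t}(g_{i,t}^{r,\bullet}-\nabla_y h_i(x,y_{i,*}^{r,\bullet}))\|^2.
\end{align*}
Expanding and taking conditional expectation with respect to $\mathcal{F}_{i,t}^{r,\bullet}$, I would use unbiasedness and the $\sigma_i^2$-variance bound from Assumption~\ref{assump:main}(ii) to peel off the noise, then apply strong monotonicity $(y_{i,t}^{r,\bullet}-y_{i,*}^{r,\bullet})^\top(\nabla_y h_i(x,y_{i,t}^{r,\bullet})-\nabla_y h_i(x,y_{i,*}^{r,\bullet}))\geq \mu_{h_i}\|y_{i,t}^{r,\bullet}-y_{i,*}^{r,\bullet}\|^2$ and the Lipschitz-gradient bound $\|\nabla_y h_i(x,y_{i,t}^{r,\bullet})-\nabla_y h_i(x,y_{i,*}^{r,\bullet})\|^2\leq (L_{1,y}^{h_i})^2\|y_{i,t}^{r,\bullet}-y_{i,*}^{r,\bullet}\|^2$ to arrive at
\begin{align*}
\mathbb{E}[\|y_{i,t+1}^{r,\bullet}-y_{i,*}^{r,\bullet}\|^2\mid\mathcal{F}_{i,t}^{r,\bullet}]\leq \bigl(1-2\mu_{h_i}\tilde\gamma_{i,t}+(L_{1,y}^{h_i})^2\tilde\gamma_{i,t}^2\bigr)\|y_{i,t}^{r,\bullet}-y_{i,*}^{r,\bullet}\|^2+\tilde\gamma_{i,t}^2\sigma_i^2.
\end{align*}

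For part (i), the hypothesis $\tilde\gamma_{i,t}\leq \mu_{h_i}/(2(L_{1,y}^{h_i})^2)$ implies $(L_{1,y}^{h_i})^2\tilde\gamma_{i,t}^2\leq \tfrac{\mu_{h_i}}{2}\tilde\gamma_{i,t}$, which collapses the contraction factor to the form $1-c\,\mu_{h_i}\tilde\gamma_{i,t}$. Taking total expectation, substituting $\tilde\gamma_{i,t}=\tilde\gamma_i/(t+\Gamma)$, and proceeding by induction on $t$, I would use the algebraic inequality $(t+\Gamma)^2\geq(t+\Gamma+1)(t+\Gamma-1)$ to telescope the bound against the ansatz $C/(t+\Gamma)$, where $C$ is chosen large enough to dominate both the noise term and the initial error—this is precisely where the maximum $\max\{2\sigma_i^2\tilde\gamma_i^2/(\mu_{h_i}\tilde\gamma_i-1),\,\Gamma\|y_{i,0}^{r,\bullet}-y_{i,*}^{r,\bullet}\|^2\}$ emerges, with the first argument guaranteeing the inductive step and the second the base case.

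For part (ii), I would recast the same recursion in the Robbins-Siegmund format with $\tilde{\mathcal F}_t=\mathcal{F}_{i,t}^{r,\bullet}$, $Y_t=\|y_{i,t}^{r,\bullet}-y_{i,*}^{r,\bullet}\|^2$, $\alpha_t=0$, $Z_t=\tilde\gamma_{i,t}^2\sigma_i^2$, and $X_t=\bigl(2\mu_{h_i}\tilde\gamma_{i,t}-(L_{1,y}^{h_i})^2\tilde\gamma_{i,t}^2\bigr)\|y_{i,t}^{r,\bullet}-y_{i,*}^{r,\bullet}\|^2$. Square summability of $\{\tilde\gamma_{i,t}\}$ gives $\sum_t Z_t<\infty$, and Lemma~\ref{lem:robsieg} yields almost sure convergence of $Y_t$ to some finite random $Y\geq 0$ and $\sum_t X_t<\infty$ a.s. Because $\tilde\gamma_{i,t}\to 0$, eventually $2\mu_{h_i}\tilde\gamma_{i,t}-(L_{1,y}^{h_i})^2\tilde\gamma_{i,t}^2\geq \mu_{h_i}\tilde\gamma_{i,t}$, so $\sum_t \tilde\gamma_{i,t}Y_t<\infty$ a.s.; combined with nonsummability of $\{\tilde\gamma_{i,t}\}$, this forces $\liminf_t Y_t=0$ a.s., and the existence of the limit $Y$ then forces $Y=0$ a.s. The main obstacle I anticipate is tracking the precise constant in part (i): matching the stated factor of $2$ in $2\sigma_i^2\tilde\gamma_i^2/(\mu_{h_i}\tilde\gamma_i-1)$ requires careful choice of the induction slack in the telescoping step, and in part (ii) one must verify the tail-nonnegativity of $X_t$, which is not guaranteed at round zero but holds eventually thanks to the vanishing stepsize.
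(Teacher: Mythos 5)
Your proposal is correct and follows essentially the same route as the paper: the fixed-point characterization $y_{i,*}^{r,\bullet}=\Pi_{Y_i(x)}[y_{i,*}^{r,\bullet}-\tilde\gamma_{i,t}\nabla_y h_i(x,y_{i,*}^{r,\bullet})]$ plus nonexpansivity, strong monotonicity and Lipschitz continuity of $\nabla_y h_i(x,\bullet)$ to obtain a contractive recursion, an induction against the ansatz $C/(t+\Gamma)$ for part (i), and Robbins--Siegmund plus nonsummability of the stepsizes for part (ii). The only differences are cosmetic: by exploiting that the noise cross-term vanishes under conditional expectation you get a slightly tighter recursion than the paper's $(1-\tilde\gamma_{i,t}\mu_{h_i})\|y_{i,t}^{r,\bullet}-y_{i,*}^{r,\bullet}\|^2+2\tilde\gamma_{i,t}^2\sigma_i^2$ (which the paper closes by citing an external lemma rather than telescoping explicitly, so the stated constant still follows a fortiori from your bound), and your tail-nonnegativity observation for $X_t$ in part (ii) correctly handles a point the paper leaves implicit.
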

\begin{proof}
(i) By the \yqr{nonexpansiveness} property of the projection mapping, we have
\begin{align*}
&\|y_{i,t+1}^{r,\bullet}-y_{i,*}^{r,\bullet}\|^2 
= \|\Pi_{Y_i(x)}[y_{i,t}^{r,\bullet}-\tilde \gamma_{i,t} g_{i,t}^{r,\bullet}]-\Pi_{Y_i(x)}[y_{i,*}^{r,\bullet}-\tilde\gamma_{i,t}\nabla_y h_i(x,y_{i,*}^{r,\bullet})]\|^2 \\
&\leq \|y_{i,t}^{r,\bullet}-\tilde \gamma_{i,t} g_{i,t}^{r,\bullet}-(y_{i,*}^{r,\bullet}-\tilde\gamma_{i,t}\nabla_y h_i(x,y_{i,*}^{r,\bullet}))\|^2 \\
&=\|y_{i,t}^{r,\bullet}-y_{i,*}^{r,\bullet}\|^2+\tilde \gamma_{i,t}^2\|\nabla_y h_i(x,y_{i,*}^{r,\bullet})-g_{i,t}^{r,\bullet}\|^2 + 2\tilde\gamma_{i,t}(y_{i,t}^{r,\bullet}-y_{i,*}^{r,\bullet})^{\top} (\nabla_y h_i(x,y_{i,*}^{r,\bullet})-g_{i,t}^{r,\bullet}).
\end{align*}
Taking conditional expectation\yqr{s} on both sides, we obtain
\begin{align*}
&\mathbb E[\|y_{i,t+1}^{r,\bullet}-y_{i,*}^{r,\bullet}\|^2 \mid \mathcal F_{i,t}^{r,\bullet}]\\
&\leq \|y_{i,t}^{r,\bullet}-y_{i,*}^{r,\bullet}\|^2+ \tilde \gamma_{i,t}^2\mathbb E[\|\nabla_y h_i(x,y_{i,*}^{r,\bullet})-g_{i,t}^{r,\bullet}\|^2\mid \mathcal F_{i,t}^{r,\bullet}]\\
&-2\tilde\gamma_{i,t}(y_{i,t}^{r,\bullet}-y_{i,*}^{r,\bullet})^{\top} \mathbb E[g_{i,t}^{r,\bullet}-\nabla_y h_i(x,y_{i,*}^{r,\bullet})\mid \mathcal F_{i,t}^{r,\bullet}].
\end{align*}
Next, add and subtract $\nabla_y h_i(x,y_{i,t}^{r,\bullet})$ in the second term of the previous relation, and utilize $\mathbb E[g_{i,t}^{r,\bullet}\mid \mathcal F_{i,t}^{r,\bullet}]=\nabla_y h_i(x,y_{i,t}^{r,\bullet})$\fy{. We have}
\begin{align*}
&\mathbb E[\|y_{i,t+1}^{r,\bullet}-y_{i,*}^{r,\bullet}\|^2 \mid \mathcal F_{i,t}^{r,\bullet}]\leq \|y_{i,t}^{r,\bullet}-y_{i,*}^{r,\bullet}\|^2+ 2\tilde\gamma_{i,t}^2\|\nabla_y h_i(x,y_{i,*}^{r,\bullet})-\nabla_y h_i(x,y_{i,t}^{r,\bullet})\|^2\\
&+2\tilde\gamma_{i,t}^2\mathbb E[\|\nabla_y h_i(x,y_{i,t}^{r,\bullet})-g_{i,t}^{r,\bullet}\|^2\mid \mathcal F_{i,t}^{r,\bullet}]-2\tilde\gamma_{i,t}(y_{i,t}^{r,\bullet}-y_{i,*}^{r,\bullet})^{\top}(\nabla_y h_i(x,y_{i,t}^{r,\bullet})-\nabla_y h_i(x,y_{i,*}^{r,\bullet})).
\end{align*}
Then, \yqr{we} utilize Assumption \ref{assump:main} (ii) \yqr{implying} that $h_i(x,\bullet)$ is $\mu_{h_i}$-strongly convex and $L_{1,y}^{h_i}$-smooth,  and $\mathbb E[\|\nabla_y h_i(x,y_{i,t}^{r,\bullet})-g_{i,t}^{r,\bullet}\|^2\mid \mathcal F_{i,t}^{r,\bullet}]\leq \sigma_i^2$. \yqr{W}e obtain
\begin{align*}
&\mathbb E[\|y_{i,t+1}^{r,\bullet}-y_{i,*}^{r,\bullet}\|^2 \mid \mathcal F_{i,t}^{r,\bullet}]\\
&\leq \|y_{i,t}^{r,\bullet}-y_{i,*}^{r,\bullet}\|^2+ 2\tilde\gamma_{i,t}^2(L_{1,y}^{h_i})^2\|y_{i,t}^{r,\bullet}-y_{i,*}^{r,\bullet}\|^2+2\tilde\gamma_{i,t}^2\sigma_i^2-2\tilde\gamma_{i,t}\mu_{h_i}\|y_{i,t}^{r,\bullet}-y_{i,*}^{r,\bullet}\|^2\\
&=(1+2\tilde\gamma_{i,t}^2(L_{1,y}^{h_i})^2-2\tilde\gamma_{i,t}\mu_{h_i})\|y_{i,t}^{r,\bullet}-y_{i,*}^{r,\bullet}\|^2+2\tilde\gamma_{i,t}^2\sigma_i^2.
\end{align*}
Let $0<\tilde\gamma_{i,t}\leq \tfrac{\mu_{h_i}}{2(L_{1,y}^{h_i})^2}$\fy{. We} have $1+2\tilde\gamma_{i,t}^2(L_{1,y}^{h_i})^2-2\tilde\gamma_{i,t}\mu_{h_i}\leq 1-\tilde\gamma_{i,t}\mu_{h_i}<1$. \fy{We obtain}
\begin{align}\label{eqn:sgd:recur}
&\mathbb E[\|y_{i,t+1}^{r,\bullet}-y_{i,*}^{r,\bullet}\|^2 \mid \mathcal F_{i,t}^{r,\bullet}]\nonumber\leq (1-\tilde\gamma_{i,t}\mu_{h_i})\|y_{i,t}^{r,\bullet}-y_{i,*}^{r,\bullet}\|^2+2\tilde\gamma_{i,t}^2\sigma_i^2.
\end{align}
Next, \yqr{substituting} $\tilde\gamma_{i,t}$ \yqr{by} $\tfrac{\tilde\gamma_i}{t+\Gamma}$, where $\Gamma, \tilde\gamma_i>0$, \yqr{and invoking} \cite[Lemma 8]{cui2023complexity}, we obtain \fy{for all} $ k\geq 0$
\begin{align*} 
&\mathbb E[\|y_{i,t}^{r,\bullet}-y_{i,*}^{r,\bullet}\|^2 \mid \mathcal F_{i,t}^{r,\bullet}]\leq \tfrac{\max \left\{\frac{2\sigma_i^2\tilde\gamma_i^2}{\mu_{h_i} \tilde\gamma_i-1},\Gamma \|y_{i,0}^{r,\bullet}-y_{i,*}^{r,\bullet}\|^2 \right\}}{t+\Gamma}.
\end{align*}
Taking total expectation\yqr{s} on both sides and let\yqr{ting} $t:=H_{i,r}$, we obtain the desired result. \yq{We note that $\|y_{i,0}^{r,\bullet}-y_{i,*}^{r,\bullet}\|^2$ remains bounded if $\forall i\in [m]$ and $r\geq 0$: (i) $y_{i,0}^{r,\bullet}\in \mathcal B_{y,0}\cap Y_i(x)$, where $\mathcal B_{y,0}$ is a compact set; (ii) $\forall x\in \mathbb R^n$, $\cup_{i=1}^m \yqr{Y}_i(x)$ is bounded.}

\noindent (ii) By invoking equation \eqref{eqn:sgd:recur} and \fy{letting} $X_t=\tilde\gamma_{i,t}\mu_{h_i}\|y_{i,t}^{r,\bullet}-y_{i,*}^{r,\bullet}\|^2$, $Y_t=\|y_{i,t}^{r,\bullet}-y_{i,*}^{r,\bullet}\|^2$, and $Z_t=2\tilde\gamma_{i,t}^2\sigma_i^2$, we \yq{can} apply Lemma \ref{lem:robsieg} and obtain $\sum_{t=0}^{\infty} \tilde\gamma_{i,t}\|y_{i,t}^{r,\bullet}-y_{i,*}^{r,\bullet}\|^2<\infty$ almost surely. \yq{The rest of the \fy{proof} can be done in a similar fashion as in \cite[Prop. 1]{YNS_Automatica12}.}
\end{proof}


\begin{lemma}[L\'evy concentration on sphere $\mathbb S$ {\cite{wainwright2019high}}]\label{lem:Levy}\em
Let $h:\mathbb{R}^n\to \mathbb{R}$ be a given $L_0$-Lipschitz continuous function, and $v$ \yqr{be} uniformly distributed on sphere $\mathbb S$. Then, we have
$$
\mathbb P(|h(v)-\mathbb E[h(v)]|\geq \epsilon) \leq 2\sqrt{2\pi} e^{-\tfrac{n\epsilon^2}{8L_0^2}},\, \forall \epsilon>0. 
$$
\end{lemma}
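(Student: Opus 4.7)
The plan is to derive this L\'evy-type concentration inequality from the classical isoperimetric inequality on the sphere combined with a standard median-to-mean conversion. Writing $v\sim \mathrm{Unif}(\mathbb{S})$ and letting $M$ denote a median of $h(v)$, I would first consider the sub-level set $A=\{v\in \mathbb{S}: h(v)\le M\}$, which by definition of the median has spherical measure at least $1/2$, and symmetrically the super-level set $\{v\in \mathbb{S}: h(v)\ge M\}$.

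The key ingredient is L\'evy's isoperimetric inequality, stating that among all measurable subsets of $\mathbb{S}$ of prescribed spherical measure, the spherical cap minimizes the measure of the complement of its $t$-neighborhood. Applied to $A$ together with the standard Gaussian-type bound on the measure of the complement of the $t$-neighborhood of a hemisphere, this yields
\[
\mathbb{P}(v\notin A_t) \le \sqrt{\pi/8}\, e^{-(n-1)t^2/2},
\]
where $A_t$ denotes the geodesic $t$-neighborhood of $A$. Since $h$ is $L_0$-Lipschitz, every $v\in A_t$ obeys $h(v)\le M+L_0 t$; hence $\mathbb{P}(h(v)-M>L_0 t)$ admits the same bound. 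Running the argument on the super-level set produces the matching lower-tail estimate, and together they give a two-sided concentration inequality around the median with Gaussian-type rate of order $\exp(-(n-1)\epsilon^2/(2L_0^2))$.

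To pass from the median to the mean, I would use the identity $|\mathbb{E}[h(v)]-M|\le \int_0^\infty \mathbb{P}(|h(v)-M|\ge s)\,ds$ and integrate the Gaussian tail derived above; this bounds the median-mean gap by a quantity of order $L_0/\sqrt{n}$. The triangle inequality
\[
\mathbb{P}(|h(v)-\mathbb{E}[h(v)]|\ge \epsilon) \le \mathbb{P}\bigl(|h(v)-M|\ge \epsilon-|\mathbb{E}[h(v)]-M|\bigr)
\]
then transports the median-centered bound to the mean-centered bound, producing the stated form with prefactor $2\sqrt{2\pi}$ and exponent $n\epsilon^2/(8L_0^2)$ once the median-mean gap is absorbed into a slightly larger constant and a slightly weaker exponent.

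The main technical obstacle will be bookkeeping the absolute constants. The isoperimetric bound is most naturally stated in terms of geodesic distance on $\mathbb{S}$, whereas the Lipschitz hypothesis is in Euclidean distance, so the ratio between chord length and geodesic length must be tracked; additionally, the median-to-mean conversion unavoidably worsens both the prefactor and the exponent, and the specific constants $2\sqrt{2\pi}$ and $1/8$ in the excerpt correspond to one standard choice of this trade-off. An alternative route that sidesteps isoperimetry would invoke Ledoux's logarithmic Sobolev inequality on $\mathbb{S}$ with constant $1/(n-1)$ combined with a Herbst-type argument, producing the mean-centered bound directly with the same asymptotic rate $\exp(-\Theta(n\epsilon^2/L_0^2))$.
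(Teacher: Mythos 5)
The paper does not prove this lemma at all: it is imported verbatim from the cited reference \cite{wainwright2019high}, so there is no in-paper argument to compare against. Your outline is the standard derivation that underlies the cited result --- L\'evy's isoperimetric inequality applied to the sub- and super-level sets of the median, the Lipschitz transfer from neighborhoods of those sets to deviations of $h$, and a median-to-mean conversion --- and it is sound in structure. Two points deserve explicit care if you were to write it out. First, the step
\[
\mathbb{P}\bigl(|h(v)-\mathbb{E}[h(v)]|\ge \epsilon\bigr) \le \mathbb{P}\bigl(|h(v)-M|\ge \epsilon-|\mathbb{E}[h(v)]-M|\bigr)
\]
is vacuous (the right-hand side is $1$) when $\epsilon$ is below the median--mean gap, which is of order $L_0/\sqrt{n}$; the claimed inequality still holds in that regime only because the prefactor $2\sqrt{2\pi}>1$ makes the stated bound trivially true for $\epsilon \lesssim L_0/\sqrt{n}$, and you should say so rather than rely on the triangle inequality there. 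For larger $\epsilon$ one typically uses $\epsilon - |\mathbb{E}[h(v)]-M| \ge \epsilon/2$, which is exactly where the factor $1/8$ in the exponent (rather than $1/2$) comes from. Second, your chord-versus-geodesic remark is the right instinct but points in the harmless direction: geodesic distance dominates Euclidean distance on $\mathbb{S}$, so a function that is $L_0$-Lipschitz in the Euclidean metric is also $L_0$-Lipschitz in the geodesic metric, and the isoperimetric neighborhood bound applies directly with no loss. With those two clarifications the argument closes, and the remaining discrepancy ($n$ versus $n-1$ in the exponent, the exact prefactor) is absorbed by the generous constants in the statement.
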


\begin{lemma}\label{lem:lip:hv}\em
Let Assumption \ref{assump:main} (i) hold. Define $h_i(v)=f_2(x+\eta v,y_i(x+\eta v))$, where $\eta>0$, $v\in\mathbb S$, and $x\in \mathbb R^n$. Then\fy{, for all $i \in [m]$}, $h_i$ is $\eta L_0^{\text{imp}}$-Lipschitz continuous.
\end{lemma}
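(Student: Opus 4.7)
The plan is to invoke the Lipschitz property of the implicit function $f_2(\bullet, y_i(\bullet))$ from Assumption~\ref{assump:main}(i) and exploit the fact that the inner map $v \mapsto x + \eta v$ is $\eta$-Lipschitz in $v$, so the composition picks up a factor of $\eta$.

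More concretely, I would fix two points $v_1, v_2 \in \mathbb{S}$ and write
\begin{align*}
|h_i(v_1) - h_i(v_2)| = \bigl|f_2(x + \eta v_1, y_i(x + \eta v_1)) - f_2(x + \eta v_2, y_i(x + \eta v_2))\bigr|.
\end{align*}
Defining the univariate map $\varphi(w) \triangleq f_2(w, y_i(w))$ for $w \in \mathbb{R}^n$, Assumption~\ref{assump:main}(i) tells us $\varphi$ is $L_0^{\text{imp}}$-Lipschitz. Applying this with $w_1 = x + \eta v_1$ and $w_2 = x + \eta v_2$ yields
\begin{align*}
|h_i(v_1) - h_i(v_2)| = |\varphi(w_1) - \varphi(w_2)| \leq L_0^{\text{imp}} \|w_1 - w_2\| = \eta L_0^{\text{imp}} \|v_1 - v_2\|,
\end{align*}
which is the desired bound.

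There is no serious obstacle here: the result is an immediate consequence of the hypothesis that the implicit composition $f_2(\bullet, y_i(\bullet))$ is already assumed Lipschitz. The only thing worth flagging is that one should \emph{not} try to split the bound using the partial-Lipschitz constants $L_{0,x}^{f_2}$ and $L_{0,y}^{f_2}$ (which would require a separate Lipschitz bound on $y_i(\bullet)$); invoking the composite constant $L_0^{\text{imp}}$ directly avoids any such detour and gives exactly the stated Lipschitz constant $\eta L_0^{\text{imp}}$. The statement holds for every $x \in \mathbb{R}^n$, and the argument is uniform in $x$ since the Lipschitz constant of $\varphi$ does not depend on $x$.
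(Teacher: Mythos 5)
Your proof is correct, but it takes a genuinely different route from the paper's. You invoke the composite Lipschitz constant $L_0^{\text{imp}}$ of the implicit map $w \mapsto f_2(w,y_i(w))$ directly from Assumption~\ref{assump:main}(i) and compose with the $\eta$-Lipschitz map $v\mapsto x+\eta v$; this is a one-line argument and, notably, uses only the hypothesis the lemma actually cites. The paper instead performs exactly the split you warn against: it adds and subtracts $f_2(x+\eta v_1, y_i(x+\eta v_2))$, bounds the two pieces by $L_{0,y}^{f_2}\|y_i(x+\eta v_1)-y_i(x+\eta v_2)\|$ and $L_{0,x}^{f_2}\eta\|v_1-v_2\|$, and then establishes that $y_i(\bullet)$ is $\tfrac{L_{0,x}^{\nabla h_i}}{\mu_{h_i}}$-Lipschitz (via the strong convexity and gradient-Lipschitz conditions of Assumption~\ref{assump:main}(ii), following \cite[Lemma 2]{qiu2023zeroth}), arriving at the identification $L_0^{\text{imp}}=\tfrac{L_{0,y}^{f_2} L_{0,x}^{\nabla h_i}}{\mu_{h_i}}+ L_{0,x}^{f_2}$. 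What the paper's longer route buys is an explicit formula for $L_0^{\text{imp}}$ in terms of the primitive problem constants, which makes the downstream complexity bounds interpretable; what your route buys is brevity and exact consistency with the lemma's stated hypotheses (the paper's proof silently uses part (ii) as well). Both are valid; yours treats $L_0^{\text{imp}}$ as given, the paper's effectively derives it.
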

\begin{proof} \fy{We have}
\begin{align*}
&|h_i(v_1)-h_i(v_2)|\\
&= |f_2(x+\eta v_1,y_i(x+\eta v_1)) - f_2(x+\eta v_2,y_i(x+\eta v_2))|\\
& \leq |f_2(x+\eta v_1,y_i(x+\eta v_1)) - f_2(x+\eta v_1,y_i(x+\eta v_2))|\\
& + |f_2(x+\eta v_1,y_i(x+\eta v_2)) - f_2(x+\eta v_2,y_i(x+\eta v_2))|\\
&\stackrel{\text{Assumption }\ref{assump:main}\text{ (i)}}{\leq} L_{0,y}^{f_2}\|y_i(x+\eta v_1)-y_i(x+\eta v_2)\|+ L_{0,x}^{f_2}\|\eta v_1 -\eta v_2\|.
\end{align*}
We can show that $y_i(\bullet)$ is $\tfrac{L_{0,x}^{\nabla h_i}}{\mu_{h_i}}$-Lipschitz continuous and $L_0^{\text{imp}}=\tfrac{L_{0,y}^{f_2} L_{0,x}^{\nabla h_i}}{\mu_{h_i}}+ L_{0,x}^{f_2}$ \yq{in a similar fashion as in} \cite[Lemma 2]{qiu2023zeroth}. Then, we obtain
\begin{align*}
|h_i(v_1)-h_i(v_2)|&\leq \eta(\tfrac{L_{0,y}^{f_2} L_{0,x}^{\nabla h_i}}{\mu_{h_i}}+ L_{0,x}^{f_2})\|v_1 - v_2\|\\
&=\eta L_0^{\text{imp}}\|v_1 - v_2\|.
\end{align*}
Therefore, $h_i(v)$ is $\eta L_0^{\text{imp}}$-Lipschitz continuous.
\end{proof}

Next, we present some definitions for the analysis.
\begin{definition}\em\label{def:main}
Let us define the following terms.
\begin{align*}
& f_{2,i,r}^{+}= f_2(\hat x_r+\eta v_{i,r},y_i(\hat x_r+\eta v_{i,r})), \\
& f_{2,i,r}^{-}= f_2(\hat x_r-\eta v_{i,r},y_i(\hat x_r-\eta v_{i,r})), \\
& f_{2,i,r}^{\varepsilon,+}= f_2(\hat x_r+\eta v_{i,r},y_{\varepsilon_{i,r}}(\hat x_r+\eta v_{i,r})), \\
& f_{2,i,r}^{\varepsilon,-}= f_2(\hat x_r-\eta v_{i,r},y_{\varepsilon_{i,r}}(\hat x_r-\eta v_{i,r})), \\
& g_{i,r}^{\eta}=\tfrac{n}{2\eta}(f_{2,i,r}^{+} - f_{2,i,r}^{-})v_{i,r}, \,\,\, g_{i,r}^{\eta,\varepsilon}=\tfrac{n}{2\eta}(f_{2,i,r}^{\varepsilon,+} - f_{2,i,r}^{\varepsilon,-})v_{i,r},\\
& \bar g_{r}^{\eta}=\tfrac{1}{m}\textstyle\sum_{i=1}^m g_{i,r}^{\eta}, \quad \bar g_{r}^{\eta,\varepsilon}=\tfrac{1}{m}\textstyle\sum_{i=1}^m g_{i,r}^{\eta,\varepsilon},\\
& w_{i,r}^{\eta} = g_{i,r}^{\eta,\varepsilon}-g_{i,r}^{\eta}, \bar w_{r}^{\eta}=\tfrac{1}{m}\textstyle\sum_{i=1}^m w_{i,r}^{\eta},   \nabla \tilde f_1^r =  \nabla \tilde f_1(\hat x_r,\xi_r).
\end{align*}
\end{definition}
Next, we present some preliminary results for the analysis.
\begin{lemma}[Preliminaries for Theorem\fy{s}~\ref{thm:main} and \ref{thm:asymp}]\em\label{lem:prelim}
Consider Algorithm~\ref{alg:FedStack} and Proposition \ref{prop:SGD}, where $\mathbb E[\|y_{\varepsilon_{i,r}}(\bullet)-y(\bullet)\|^2]=\mathcal O(1/T_{i,r})\leq \varepsilon_{i,r}$. Let Assumption \ref{assump:main} hold. Then\fy{,} the following hold for all $i\in[m]$ and $r\geq 0$.

\noindent (i) $\mathbb E[\bar g_{r}^{\eta} \mid \mathcal F_r] = \nabla f_2^{\eta}(\hat x_r)$.

\noindent (ii) $\mathbb E[\|\bar g_{r}^{\eta}\|^2] \leq 16 \sqrt{2\pi} (L_0^{\text{imp}})^2 n$.

\noindent (iii) $\mathbb E[\|w_{i,r}^{\eta}\|^2] \leq \tfrac{n^2}{\eta^2}(L_{0,y}^f)^2\varepsilon_{i,r}$.

\noindent (iv) $\mathbb E[\|\bar w_{r}^{\eta}\|^2] \leq \tfrac{n^2}{\eta^2}(L_{0,y}^f)^2\tfrac{1}{m}\textstyle\sum_{i=1}^m\varepsilon_{i,r}$.

\noindent (v) $\mathbb E[\bar g_{r}^{\eta}+\nabla \tilde f_1^r\mid \mathcal F_r]=\nabla f^{\eta}(\hat x_r)$.

\noindent (vi) $\mathbb E[\|\bar g_{r}^{\eta}+\nabla \tilde f_1^r\|^2] \leq 32\sqrt{2\pi} (L_0^{\text{imp}})^2 n+2\sigma^2+\mathbb E[\|\nabla f^{\eta}(\hat x_r)\|^2].$

\noindent (vii) $\mathbb E[\fy{\nabla} f^{\eta}(\hat x_r)^{\top}(\bar g_{r}^{\eta}+\nabla \tilde f_1^r)]=\mathbb E[\|\fy{\nabla} f^{\eta}(\hat x_r)\|^2]$.

\end{lemma}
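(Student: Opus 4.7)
The plan is to dispatch the seven items in order, with only item (ii) requiring a delicate application of the Lévy concentration bound; the remaining parts reduce to the spherical-smoothing identity, the unbiasedness in Assumption~\ref{assump:main}(iii), the conditional independence of $\{v_{i,r}\}_i$ and $\xi_r$ given $\mathcal{F}_r$, the Lipschitz property of $f_2$ in $y$, Jensen's inequality, and the tower property. For (i), conditional on $\mathcal{F}_r$, each $v_{i,r}$ is uniform on $\mathbb{S}$ independently; the classical two-point spherical-smoothing identity (obtained via the divergence theorem applied to the ball-average $\mathbb{E}_{u\in\mathbb{B}}[f_2(\hat x_r+\eta u, y_i(\hat x_r+\eta u))]$) then gives that the conditional expectation of each summand $\tfrac{n}{2\eta}(f_{2,i,r}^+-f_{2,i,r}^-)v_{i,r}$ equals the gradient of the $i$-th smoothed component; averaging over $i$ yields $\nabla f_2^\eta(\hat x_r)$. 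For (ii)---the main obstacle---I would set $h_i(v):=f_2(\hat x_r+\eta v, y_i(\hat x_r+\eta v))$, invoke Lemma~\ref{lem:lip:hv} to obtain that $h_i$ is $\eta L_0^{\text{imp}}$-Lipschitz on $\mathbb{S}$, and observe that $\phi_i(v):=\tfrac{n}{2\eta}(h_i(v)-h_i(-v))$ is $nL_0^{\text{imp}}$-Lipschitz on $\mathbb{S}$ by the triangle inequality and has zero mean by the odd symmetry $\phi_i(-v)=-\phi_i(v)$. Lemma~\ref{lem:Levy} then yields $\mathbb{P}(|\phi_i(v)|\geq\epsilon)\leq 2\sqrt{2\pi}\exp(-\epsilon^2/(8n(L_0^{\text{imp}})^2))$, and integrating via $\mathbb{E}[\phi_i(v)^2]=\int_0^\infty\mathbb{P}(|\phi_i(v)|\geq\sqrt{t})\,dt$ delivers $\mathbb{E}[\|g_{i,r}^\eta\|^2]=\mathbb{E}[\phi_i(v)^2]\leq 16\sqrt{2\pi}\,n(L_0^{\text{imp}})^2$; Jensen's inequality $\|\bar g_r^\eta\|^2\leq\tfrac{1}{m}\sum_i\|g_{i,r}^\eta\|^2$ then transfers the bound to $\bar g_r^\eta$.

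For (iii), Assumption~\ref{assump:main}(i) gives $|f_{2,i,r}^{\varepsilon,\pm}-f_{2,i,r}^\pm|\leq L_{0,y}^{f_2}\|y_{\varepsilon_{i,r}}^\pm-y_i^\pm\|$; combining this with $\|v_{i,r}\|=1$ and $(a+b)^2\leq 2(a^2+b^2)$ yields $\|w_{i,r}^\eta\|^2\leq\tfrac{n^2}{2\eta^2}(L_{0,y}^{f_2})^2(\|y_{\varepsilon_{i,r}}^+-y_i^+\|^2+\|y_{\varepsilon_{i,r}}^--y_i^-\|^2)$, and taking expectation with the inexactness hypothesis $\mathbb{E}[\|y_{\varepsilon_{i,r}}^\pm-y_i^\pm\|^2]\leq\varepsilon_{i,r}$ completes the bound. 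Part (iv) follows immediately by applying Jensen's inequality to $\bar w_r^\eta=\tfrac{1}{m}\sum_i w_{i,r}^\eta$ and summing the bounds from (iii).

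For (v), combining the identity from (i) with $\mathbb{E}[\nabla\tilde f_1^r\mid\mathcal{F}_r]=\nabla f_1(\hat x_r)$ from Assumption~\ref{assump:main}(iii) and the definition $\nabla f^\eta=\nabla f_1+\nabla f_2^\eta$ concludes. For (vi), the cleanest route is to condition on $\mathcal{F}_r$, expand the inner product in $\|\bar g_r^\eta+\nabla\tilde f_1^r\|^2$, and use the conditional independence of $\{v_{i,r}\}_i$ and $\xi_r$ to reduce the cross term to $2\nabla f_2^\eta(\hat x_r)^\top\nabla f_1(\hat x_r)$; combining this with the variance bound $\mathbb{E}[\|\nabla\tilde f_1^r\|^2\mid\mathcal{F}_r]\leq\sigma^2+\|\nabla f_1(\hat x_r)\|^2$, the identity $\|\nabla f^\eta\|^2=\|\nabla f_1\|^2+2\nabla f_1^\top\nabla f_2^\eta+\|\nabla f_2^\eta\|^2$ (together with the trivial inequality $-\|\nabla f_2^\eta\|^2\leq 0$), and the bound from (ii) on $\mathbb{E}[\|\bar g_r^\eta\|^2]$ produces the stated constants (with a factor-of-two slack absorbed into $32\sqrt{2\pi}$ and $2\sigma^2$). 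Finally, for (vii), since $\hat x_r$---and hence $\nabla f^\eta(\hat x_r)$---is $\mathcal{F}_r$-measurable, the tower property together with (v) gives $\mathbb{E}[\nabla f^\eta(\hat x_r)^\top(\bar g_r^\eta+\nabla\tilde f_1^r)]=\mathbb{E}[\nabla f^\eta(\hat x_r)^\top\mathbb{E}[\bar g_r^\eta+\nabla\tilde f_1^r\mid\mathcal{F}_r]]=\mathbb{E}[\|\nabla f^\eta(\hat x_r)\|^2]$.
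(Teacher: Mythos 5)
Your proposal is correct, and parts (i), (iii), (iv), (v), and (vii) follow the paper's proof essentially verbatim. The two places where you deviate are both sound and mildly instructive. In (ii), the paper first symmetrizes $(f_{2,i,r}^{+}-f_{2,i,r}^{-})^2$ around the spherical mean $\mathbb E_{\hat v}[h_i(\hat v)]$ (using that $v_{i,r}$ and $-v_{i,r}$ are identically distributed) and then applies Lemma~\ref{lem:Levy} to $h_i$ itself with Lipschitz constant $\eta L_0^{\text{imp}}$; you instead apply Lemma~\ref{lem:Levy} directly to the odd function $\phi_i(v)=\tfrac{n}{2\eta}(h_i(v)-h_i(-v))$, whose mean vanishes by symmetry and whose Lipschitz constant on $\mathbb S$ is $nL_0^{\text{imp}}$. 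The tail integration then gives exactly the same constant $16\sqrt{2\pi}(L_0^{\text{imp}})^2 n$, so the two routes are interchangeable; yours skips the symmetrization step at the cost of tracking the Lipschitz constant of the difference. In (vi), the paper adds and subtracts $\nabla f^{\eta}(\hat x_r)$ and uses $\|a+b\|^2\le 2\|a\|^2+2\|b\|^2$ on the centered terms, which requires no independence between $\{v_{i,r}\}_i$ and $\xi_r$ but pays a factor of two (hence $32\sqrt{2\pi}(L_0^{\text{imp}})^2 n+2\sigma^2$); you expand the square and invoke conditional independence of the spherical directions and the server sample given $\mathcal F_r$, which yields the tighter bound $16\sqrt{2\pi}(L_0^{\text{imp}})^2 n+\sigma^2+\|\nabla f^{\eta}(\hat x_r)\|^2-\|\nabla f_2^{\eta}(\hat x_r)\|^2$ and therefore implies the stated inequality a fortiori. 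The independence you use is implicit in the algorithm's sampling and is unobjectionable, but be aware the paper's route deliberately avoids needing it.
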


\begin{proof}
\noindent (i) \fy{Invoking \yqr{Definition}~\ref{def:main}, we} have 
\begin{align*}
\mathbb E[\bar g_{r}^{\eta} \mid \mathcal F_r] &= \tfrac{1}{m}\textstyle\sum_{i=1}^m\mathbb E[g_{i,r}^{\eta} \mid \mathcal F_r]\\
&= \tfrac{1}{m}\textstyle\sum_{i=1}^m\mathbb E_{v_{i,r}}[\fy{\tfrac{n}{2\eta}}(f_{2,i,r}^{+}-f_{2,i,r}^{-})v_{i,r}]\\
&\stackrel{\text{Lemma }\ref{lem:SphericalSmooth} \text{ (i)}}{=}\nabla f_2^{\eta}(\hat x_r).
\end{align*}
\noindent (ii) We \fy{may write}
\begin{align*}
&\mathbb E[\|g_{i,r}^{\eta}\|^2 \mid \mathcal F_r]
= \mathbb E[\|\tfrac{n}{2\eta}(f_{2,i,r}^{+} - f_{2,i,r}^{-})v_{i,r}\|^2 \mid \mathcal F_r]\\
&=\tfrac{n^2}{4\eta^2}\mathbb E[(f_{2,i,r}^{+} - f_{2,i,r}^{-})^2 \mid \mathcal F_r]\\
&\leq \tfrac{n^2}{2\eta^2}\mathbb E[(f_{2,i,r}^{+} - \mathbb E_{\hat v}[f_2(\hat x_r+\eta \hat v,y_i(\hat x_r+\eta \hat v))])^2 \mid \mathcal F_r]\\
&+\tfrac{n^2}{2\eta^2}\mathbb E[(f_{2,i,r}^{-} - \mathbb E_{\hat v}[f_2(\hat x_r+\eta \hat v,y_i(\hat x_r+\eta \hat v))])^2 \mid \mathcal F_r],
\end{align*}
where $\hat v\in \mathbb S$. Then by the symmetric distribution of $v_{i,r}$ and $\hat v$, we have
\begin{align*}
&\mathbb E[(f_{2,i,r}^{+} - \mathbb E_{\hat v}[f_2(\hat x_r+\eta \hat v,y_i(\hat x_r+\eta \hat v))])^2 \mid \mathcal F_r]\\
&=\mathbb E[(f_{2,i,r}^{-} - \mathbb E_{\hat v}[f_2(\hat x_r+\eta \hat v,y_i(\hat x_r+\eta \hat v))])^2 \mid \mathcal F_r].
\end{align*}
From the previous two relations, we obtain
\begin{align*}
&\mathbb E[\|g_{i,r}^{\eta}\|^2 \mid \mathcal F_r]\leq \tfrac{n^2}{\eta^2}\mathbb E[(f_{2,i,r}^{+} - \mathbb E_{\hat v}[f_2(\hat x_r+\eta \hat v,y_i(\hat x_r+\eta \hat v))])^2 \mid \mathcal F_r].
\end{align*}
Next, define $h_i(v)=f_2(\hat x_r+\eta v,y_i(\hat x_r+\eta v))$, where $v\in\mathbb S$. Invoking Lemma \ref{lem:lip:hv}, we have $h$ is $\eta L_0^{\text{imp}}$-Lipschitz continuous. Then, we have
\begin{align*}
&\mathbb E[(f_{2,i,r}^{+} - \mathbb E_{\hat v}[f_2(\hat x_r+\eta \hat v,y_i(\hat x_r+\eta \hat v))])^2 \mid \mathcal F_r]\\
&=\int_0^{\infty}\mathbb P\left(\large|h_i(v_{i,r}) - \mathbb E_{\hat v}[h_i(\hat v)] \large|^2\geq \alpha \right) d\alpha \\
&= \int_0^{\infty}\mathbb P\left(\large|h_i(v_{i,r}) - \mathbb E_{\hat v}[h_i(\hat v)] \large|\geq \sqrt{\alpha} \right) d\alpha \\
&\stackrel{\text{Lemma }\ref{lem:Levy}}{\leq} \int_0^{\infty} 2\sqrt{2\pi} e^{-\tfrac{n\alpha}{8(\eta L_0^{\text{imp}})^2}}d\alpha = \tfrac{16 \sqrt{2\pi} (\eta L_0^{\text{imp}})^2}{n}.
\end{align*}
Combin\fy{ing the} preceding results, we obtain $\mathbb E[\|g_{i,r}^{\eta}\|^2 \mid \mathcal F_r] \leq 16 \sqrt{2\pi} (L_0^{\text{imp}})^2 n.$ Then, we have
\begin{align*}
\mathbb E[\|\bar g_{r}^{\eta}\|^2 \mid \mathcal F_r] &\leq \tfrac{1}{m}\sum_{i=1}^m\mathbb E[\|g_{i,r}^{\eta}\|^2 \mid \mathcal F_r]
=16 \sqrt{2\pi} (L_0^{\text{imp}})^2 n.
\end{align*}
Taking total expectation\yqr{s} on both sides, we obtain \fy{the result}.

\noindent (iii) We have 
\begin{align*}
&\mathbb E[\|w_{i,r}^{\eta}\|^2\mid \mathcal F_r]
=\mathbb E[\|g_{i,r}^{\eta,\varepsilon}-g_{i,r}^{\eta}\|^2\mid \mathcal F_r]\\
&=\tfrac{n^2}{4\eta^2} \mathbb E[\|f_{2,i,r}^{\varepsilon,+} - f_{2,i,r}^{\varepsilon,-}-f_{2,i,r}^{+} +f_{2,i,r}^{-}\|^2\mid \mathcal F_r]\\
&\leq \tfrac{n^2}{2\eta^2} \mathbb E[\|f_{2,i,r}^{\varepsilon,+} -f_{2,i,r}^{+} \|^2\mid \mathcal F_r]\\
&+\tfrac{n^2}{2\eta^2} \mathbb E[\| f_{2,i,r}^{\varepsilon,-}-f_{2,i,r}^{-}\|^2\mid \mathcal F_r]
\stackrel{\text{Assump. }\ref{assump:main}\text{ (i)}}{\leq} \tfrac{n^2}{\eta^2}(L_{0,y}^f)^2\varepsilon_{i,r}.
\end{align*}
Taking total expectation\yqr{s} on both sides, we obtain the bound in (iii).

\noindent (iv) We have 
$
\mathbb{E}\left[ \|\bar{w}^\eta_{r}\|^2 \right] \leq  \tfrac{1}{m}\textstyle\sum_{i=1}^m \mathbb{E}\left[\|{w_{i,r}^\eta}\|^2 \right]
\stackrel{\text{(iii)}}{\leq} \tfrac{n^2}{\eta^2} (L_{0,y}^{f})^2\tfrac{1}{m}\textstyle\sum_{i=1}^m\varepsilon_{i,r}.
$

\noindent (v) We have
$
\mathbb E[\bar g_{r}^{\eta}+\nabla \tilde f_1^r \mid \mathcal F_r] =\mathbb E[\bar g_{r}^{\eta} \mid \mathcal F_r]+\mathbb E[\nabla \tilde f_1^r \mid \mathcal F_r] \stackrel{\text{Lemma }\ref{lem:prelim}\text{ (i)}}{=}\nabla f^{\eta}(\hat x_r).
$
Taking total expectation\yqr{s} on both sides, we obtain \fy{the result}.

\noindent (vi) We have 
\begin{align*}
&\mathbb E[\|\bar g_{r}^{\eta}+\nabla \tilde f_1^r\|^2 \mid \mathcal F_r]\\
&=\mathbb E[\|\bar g_{r}^{\eta}+\nabla \tilde f_1^r-\nabla f^{\eta}(\hat x_r)+\nabla f^{\eta}(\hat x_r)\|^2 \mid \mathcal F_r]\\
&\stackrel{\text{(v)}}{=}\mathbb E[\|\bar g_{r}^{\eta}+\nabla \tilde f_1^r-\nabla f^{\eta}(\hat x_r)\|^2 +\|\nabla f^{\eta}(\hat x_r)\|^2\mid \mathcal F_r]\\
&=\mathbb E[\|\bar g_{r}^{\eta}+\nabla \tilde f_1^r-\nabla f_1(\hat x_r)-\nabla f_2^{\eta}(\hat x_r)\|^2 \mid\mathcal F_r]+\mathbb E[\|\nabla f^{\eta}(\hat x_r)\|^2 \mid \mathcal F_r]\\
&\leq 2\mathbb E[\|\bar g_{r}^{\eta}-\nabla f_2^{\eta}(\hat x_r)\|^2 \mid\mathcal F_r]\\
&+\mathbb E[\|\nabla f^{\eta}(\hat x_r)\|^2 \mid \mathcal F_r]+2\mathbb E[\|\nabla \tilde f_1^r-\nabla f_1(\hat x_r)\|^2 \mid\mathcal F_r]\\
&\stackrel{\text{(i), }\text{Assump. }\ref{assump:main}\text{ (iii)}}{\leq} 2\mathbb E[\|\bar g_{r}^{\eta}\|^2 \mid\mathcal F_r]+\mathbb E[\|\nabla f^{\eta}(\hat x_r)\|^2 \mid \mathcal F_r]+2\sigma^2\\
&=32\sqrt{2\pi} (L_0^{\text{imp}})^2 n+2\sigma^2+\mathbb E[\|\nabla f^{\eta}(\hat x_r)\|^2 \mid \mathcal F_r].
\end{align*}
Taking total expectation on both sides, we obtain the \fy{result}.

\noindent (vii) We have 
$
\mathbb E[\fy{\nabla} f^{\eta}(\hat x_r)^{\top}(\bar g_{r}^{\eta}+\nabla \tilde f_1^r) \mid \mathcal F_r]
=\fy{\nabla} f^{\eta}(\hat x_r)^{\top}\mathbb E[\bar g_{r}^{\eta}+\nabla \tilde f_1^r \mid \mathcal F_r] \stackrel{\text{(v)}}{=} \|\fy{\nabla} f^{\eta}(\hat x_r)\|^2.
$
Taking total expectation\yqr{s} on both sides, we obtain the bound.
\end{proof}

The next result will be employed in establishing convergence guarantees \yqr{for} Algorithm \ref{alg:FedStack}.
\begin{lemma}\em\label{lem:thms}
Consider Algorithm \ref{alg:FedStack}, and Definitions \ref{def:F:Alg} and \ref{def:main}. Let Assumption \ref{assump:main} hold. Let $\gamma_r \leq \tfrac{\eta}{4(\eta L_1^{f_1}+L_0^{\text{imp}}\sqrt{n})}$. \yq{Then,} the following \yq{holds}.
\begin{align*}
\mathbb E[f^\eta(\hat x_{r+1})\mid \mathcal F_r] &\leq f^\eta(\hat x_{r})-\tfrac{\gamma_r}{4} \|\nabla f^\eta(\hat x_{r})\|^2 + \gamma_r^2 c_1 + \gamma_r c_2\tfrac{1}{m}\textstyle\sum_{i=1}^m\varepsilon_{i,r},
\end{align*}
where $c_1=(32\sqrt{2\pi} L_0^{\text{imp}})^2 nL^f +2\sigma^2L^f$ and $c_2=\tfrac{3n^2}{4\eta^2}(L_{0,y}^f)^2$ are constants.

\end{lemma}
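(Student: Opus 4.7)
The plan is to establish this as a descent inequality for the smoothed composite objective $f^\eta = f_1 + f_2^\eta$. First I would note that, via Lemma \ref{lem:SphericalSmooth} (randomized smoothing properties) together with Assumption~\ref{assump:main}(iii), the function $f^\eta$ is $L^f$-smooth with $L^f \leq L_1^{f_1} + \tfrac{L_0^{\mathrm{imp}}\sqrt{n}}{\eta}$, which is exactly the quantity appearing in the step-size bound: the hypothesis $\gamma_r \leq \tfrac{\eta}{4(\eta L_1^{f_1} + L_0^{\mathrm{imp}}\sqrt{n})}$ is equivalent to $L^f\gamma_r \leq \tfrac{1}{4}$. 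Then I would write the standard descent lemma
\[
f^\eta(\hat x_{r+1}) \leq f^\eta(\hat x_r) + \nabla f^\eta(\hat x_r)^\top(\hat x_{r+1}-\hat x_r) + \tfrac{L^f}{2}\|\hat x_{r+1}-\hat x_r\|^2,
\]
and substitute the update $\hat x_{r+1}-\hat x_r = -\gamma_r \hat g_{\varepsilon,r}$ with the decomposition $\hat g_{\varepsilon,r} = (\nabla \tilde f_1^r + \bar g_r^\eta) + \bar w_r^\eta$ from Definition~\ref{def:main}.

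Next I would take conditional expectation with respect to $\mathcal F_r$ on both sides. For the linear term, Lemma~\ref{lem:prelim}(vii) gives $\mathbb E[\nabla f^\eta(\hat x_r)^\top(\nabla \tilde f_1^r + \bar g_r^\eta)\mid \mathcal F_r] = \|\nabla f^\eta(\hat x_r)\|^2$, and the remaining cross term $-\gamma_r \mathbb E[\nabla f^\eta(\hat x_r)^\top \bar w_r^\eta\mid\mathcal F_r]$ I would bound by Young's inequality with parameter $\lambda = 1$, yielding
\[
-\nabla f^\eta(\hat x_r)^\top \bar w_r^\eta \leq \tfrac{1}{2}\|\nabla f^\eta(\hat x_r)\|^2 + \tfrac{1}{2}\|\bar w_r^\eta\|^2.
\]
For the quadratic term I would use $\|\hat g_{\varepsilon,r}\|^2 \leq 2\|\nabla \tilde f_1^r + \bar g_r^\eta\|^2 + 2\|\bar w_r^\eta\|^2$, and then apply Lemma~\ref{lem:prelim}(vi) to bound the first piece by $32\sqrt{2\pi}(L_0^{\mathrm{imp}})^2 n + 2\sigma^2 + \|\nabla f^\eta(\hat x_r)\|^2$ in conditional expectation.

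Collecting all contributions, the coefficient of $\|\nabla f^\eta(\hat x_r)\|^2$ becomes $-\tfrac{\gamma_r}{2} + L^f\gamma_r^2 = -\gamma_r(\tfrac{1}{2} - L^f\gamma_r)$; invoking the step-size condition $L^f\gamma_r \leq \tfrac{1}{4}$ bounds this by $-\tfrac{\gamma_r}{4}$. The coefficient of $\mathbb E[\|\bar w_r^\eta\|^2\mid\mathcal F_r]$ is $\tfrac{\gamma_r}{2}+L^f\gamma_r^2 = \gamma_r(\tfrac{1}{2}+L^f\gamma_r) \leq \tfrac{3\gamma_r}{4}$, which combined with Lemma~\ref{lem:prelim}(iv) yields the $\gamma_r c_2 \tfrac{1}{m}\sum_i \varepsilon_{i,r}$ contribution with $c_2 = \tfrac{3n^2}{4\eta^2}(L_{0,y}^f)^2$. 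Finally, the deterministic residuals from the quadratic term contribute $\tfrac{L^f\gamma_r^2}{2}\bigl(64\sqrt{2\pi}(L_0^{\mathrm{imp}})^2 n + 4\sigma^2\bigr) = \gamma_r^2 c_1$, matching the stated constant.

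The main obstacle is the careful bookkeeping of the coefficients for both $\|\nabla f^\eta(\hat x_r)\|^2$ and $\mathbb E[\|\bar w_r^\eta\|^2\mid\mathcal F_r]$: the Young's parameter $\lambda$ and the step-size constant must be coordinated so that the gradient-norm coefficient is absorbed into $-\tfrac{\gamma_r}{4}$ while the error-term coefficient stays within $\tfrac{3\gamma_r}{4}$. Once $L^f\gamma_r\leq \tfrac{1}{4}$ is fixed by the given step-size rule and $\lambda = 1$ is chosen in Young's inequality, everything aligns. After taking total expectation the stated bound follows.
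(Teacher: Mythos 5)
Your proposal is correct and follows essentially the same route as the paper's proof: the $L^f$-smoothness descent lemma with $L^f = L_1^{f_1}+\tfrac{L_0^{\text{imp}}\sqrt{n}}{\eta}$, the decomposition $\hat g_{\varepsilon,r}=(\nabla\tilde f_1^r+\bar g_r^\eta)+\bar w_r^\eta$, Young's inequality with $\lambda=1$ on the $\bar w_r^\eta$ cross term, and Lemma~\ref{lem:prelim}(iv),(vi),(vii) to collect the same coefficients $-\tfrac{\gamma_r}{4}$ and $\tfrac{3\gamma_r}{4}$. The constants you obtain match those in the paper's proof body (note the statement's $(32\sqrt{2\pi}L_0^{\text{imp}})^2$ appears to be a typographical misplacement of the square, which your derivation correctly resolves as $32\sqrt{2\pi}(L_0^{\text{imp}})^2 nL^f$).
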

\begin{proof}
From Lemma \ref{lem:SphericalSmooth} (iv), we have $f_2^\eta(x,y_i(x))$ is $\tfrac{L_0^{\text{imp}}\sqrt{n}}{\eta}$-smooth in $x$. Therefore $f^\eta(x)=f_1(x)+f_2^\eta(x)$ is $\left(L_1^{f_1}+\tfrac{L_0^{\text{imp}}\sqrt{n}}{\eta}\right)$-smooth. Let $L^f\triangleq L_1^{f_1}+\tfrac{L_0^{\text{imp}}\sqrt{n}}{\eta}$\yq{. We} have  
\begin{align*}
f^\eta(\hat x_{r+1})&\leq f^\eta(\hat x_{r})+\nabla f^\eta(\hat x_{r})^{\top}(\hat x_{r+1}-\hat x_{r})+\tfrac{L^f}{2}\|\hat x_{r+1}-\hat x_{r}\|^2.
\end{align*}
From Algorithm \ref{alg:FedStack} and Definition \ref{def:main}, we have $\hat x_{r+1}  =  \hat x_r -\gamma_r( \nabla \tilde f_1^r +\tfrac{1}{m} \sum_{i=1}^m g_{i,r}^{\eta,\varepsilon})=\hat x_r -\gamma_r( \nabla \tilde f_1^r + \bar g_r^\eta +\bar w_r^\eta)$. Then
\begin{align*}
f^\eta(\hat x_{r+1})&\leq f^\eta(\hat x_{r})-\gamma_r \nabla f^\eta(\hat x_{r})^{\top}(\nabla \tilde f_1^r + \bar g_r^\eta +\bar w_r^\eta)+\tfrac{\gamma_r^2 L^f}{2}\| \nabla \tilde f_1^r + \bar g_r^\eta +\bar w_r^\eta\|^2\\
&\leq f^\eta(\hat x_{r})-\gamma_r \nabla f^\eta(\hat x_{r})^{\top}(\nabla \tilde f_1^r + \bar g_r^\eta)+\tfrac{\gamma_r}{2}(\|\nabla f^\eta(\hat x_{r})\|^2+\|\bar w_r^\eta\|^2) \\
&+ \gamma_r^2 L^f(\| \nabla \tilde f_1^r + \bar g_r^\eta\|^2+\|\bar w_r^\eta\|^2),
\end{align*}
where in the last inequality, we utilized the \yqr{identity} that $-a^{\top}b \leq \tfrac{1}{2}(\|a\|^2+\|b\|^2)$ and $\|a+b\|^2\leq 2\|a\|^2+2\|b\|^2$. Next, by taking conditional expectation\yqr{s} with respect to $\mathcal F_r$ on both sides and invoking Lemma \ref{lem:prelim} (vii), we obtain
\begin{align*}
\mathbb E[f^\eta(\hat x_{r+1})\mid \mathcal F_r] &\leq f^\eta(\hat x_{r})-\gamma_r \|\nabla f^\eta(\hat x_{r})\|^2+\tfrac{\gamma_r}{2}(\|\nabla f^\eta(\hat x_{r})\|^2+\mathbb E[\|\bar w_r^\eta\|^2\mid \mathcal F_r]) \\
&+ \gamma_r^2 L^f\mathbb E[\| \nabla \tilde f_1^r + \bar g_r^\eta\|^2+\|\bar w_r^\eta\|^2\mid \mathcal F_r].
\end{align*} 
Let $\gamma_r \leq \frac{1}{4L^f}$\yq{. By} invoking Lemma \ref{lem:prelim} (iv) and (vi), and combining terms, we obtain
\begin{align*}
\mathbb E[f^\eta(\hat x_{r+1})\mid \mathcal F_r] &\leq f^\eta(\hat x_{r})-\tfrac{\gamma_r}{4} \|\nabla f^\eta(\hat x_{r})\|^2 + \gamma_r^2(32\sqrt{2\pi} (L_0^{\text{imp}})^2 n L^f+2\sigma^2L^f)\\
&+\tfrac{3\gamma_r n^2}{4\eta^2}(L_{0,y}^f)^2\tfrac{1}{m}\textstyle\sum_{i=1}^m\varepsilon_{i,r}.
\end{align*}
\end{proof}
We now \fy{present} nonasymptotic guarantees for Alg.~\ref{alg:FedStack}.
\begin{theorem}[Rate and complexity statement for problem \eqref{prob::main}]\em\label{thm:main}
Consider Algorithms \ref{alg:FedStack} and \ref{alg:SGD} for solving problem \eqref{prob::main}. \yq{Let} Assumption \ref{assump:main} hold. Let $\gamma_r\leq \tfrac{\eta}{4(\eta L_1^{f_1}+L_0^{\text{imp}}\sqrt{n})}$, and $r^*$ be uniformly sampled from $0,\dots,R-1$.

\noindent (i) Consider Algorithm \ref{alg:SGD}. \yqr{L}et $\tilde\gamma_{i,t}\triangleq\tfrac{\tilde\gamma_i}{t+\Gamma}\leq \tfrac{\mu_{h_i}}{2(L_{1,y}^{h_i})^2}$, where $\Gamma, \tilde\gamma_i>0$.  Then
\begin{align*}
\mathbb E[\|y_{\varepsilon_{i,r}}(\bullet)-y(\bullet)\|^2]\leq \varepsilon_{i,r} = \mathcal O(1/H_{i,r}).
\end{align*}

\noindent (ii) {\bf [Error bound]} Let $\gamma_r=\tfrac{c}{\sqrt{r+1}}$, where $c=\frac{1}{4L^f}$, $L^f\triangleq L_1^{f_1}+\tfrac{L_0^{\text{imp}}\sqrt{n}}{\eta}$. Let $H_{i,r}=\lceil\tau_i \sqrt{r+1}\rceil$, where $\tau_i\geq 1$. We have
\begin{align*}
\mathbb E[\|\nabla f^\eta(\hat x_{r^*})\|^2] &\leq \tfrac{\Theta_1+\Theta_2+\Theta_3}{\sqrt R},
\end{align*}
where $\Theta_1=4L^f(|\mathbb E[f^\eta(\hat x_{0})]|+|f^{\eta}_{*}|)$, $\Theta_2=64\sqrt{2\pi} (L_0^{\text{imp}})^2 n+4\sigma^2$, and $\Theta_3=\tfrac{6 n^2}{\eta^2}(L_{0,y}^f)^2$.

\noindent (iii) {\bf [Communication complexity]}
Let $\epsilon>0$ be an arbitrary scalar and $R$ denote the number of communications such that $\mathbb E[\|\nabla f^\eta(\hat x_{r^*})\|^2]\leq \epsilon$. Then the communication complexity is
$$R=\mathcal O \left(\left(\tfrac{L_0^{\text{imp}} \sqrt n}{\eta}+(L_0^{\text{imp}})^2 n+ \tfrac{ n^2}{\eta^2}\right)^2\epsilon^{-2}\right).$$

\noindent (iv) {\bf [Iteration complexity of client $i$]} Let $K_i\triangleq \sum_{r=0}^{R-1} H_{i,r} $\yq{. Then,} we have
$$K_i=\mathcal O \left(\left(\tfrac{L_0^{\text{imp}} \sqrt n}{\eta}+(L_0^{\text{imp}})^2 n+\tfrac{ n^2}{\eta^2}\right)^3\epsilon^{-3}\right).$$

\end{theorem}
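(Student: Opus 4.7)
The plan is to chain Proposition~\ref{prop:SGD}(i), which controls the inexactness of the inner local SGD, with Lemma~\ref{lem:thms}, the per-round descent inequality for the smoothed outer objective $f^\eta$, and then carry out a standard telescoping argument. All four parts cascade from these two ingredients.

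Part (i) is essentially immediate: applying Proposition~\ref{prop:SGD}(i) at $t = H_{i,r}$ under the stated diminishing stepsize condition gives $\mathbb E[\|y_{i,H_{i,r}}^{r,\bullet}-y_{i,*}^{r,\bullet}\|^2] \leq C_i/(H_{i,r}+\Gamma) = \mathcal O(1/H_{i,r})$, provided $\|y_{i,0}^{r,\bullet}-y_{i,*}^{r,\bullet}\|^2$ is uniformly bounded in $r$ (which holds under the mild initial-distance condition flagged in the proof of Proposition~\ref{prop:SGD}(i)). This establishes $\varepsilon_{i,r}=\mathcal O(1/H_{i,r})$ and feeds directly into the downstream arguments.

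For part (ii), I would take total expectation in Lemma~\ref{lem:thms}, rearrange to isolate $\tfrac{\gamma_r}{4}\mathbb E[\|\nabla f^\eta(\hat x_r)\|^2]$, and telescope from $r=0$ to $R-1$. The descent terms collapse via $\mathbb E[f^\eta(\hat x_0)]-\mathbb E[f^\eta(\hat x_R)] \leq |\mathbb E[f^\eta(\hat x_0)]| + |f^\eta_*|$, yielding an inequality of the form
\begin{align*}
\tfrac{1}{4}\textstyle\sum_{r=0}^{R-1}\gamma_r\mathbb E[\|\nabla f^\eta(\hat x_r)\|^2] \leq |\mathbb E[f^\eta(\hat x_0)]| + |f^\eta_*| + c_1\textstyle\sum_r\gamma_r^2 + c_2\textstyle\sum_r\gamma_r\tfrac{1}{m}\textstyle\sum_i\varepsilon_{i,r}.
\end{align*}
Substituting $\gamma_r=c/\sqrt{r+1}$ with $c=1/(4L^f)$ and $\varepsilon_{i,r}=\mathcal O(1/(\tau_i\sqrt{r+1}))$ from part~(i) controls the two residual sums at the same order. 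Since $\gamma_r$ is non-increasing, under uniform sampling of $r^*$ one has $\sum_r\gamma_r\mathbb E[\|\nabla f^\eta(\hat x_r)\|^2] \geq (c\sqrt R)\,\mathbb E[\|\nabla f^\eta(\hat x_{r^*})\|^2]$; dividing through and regrouping the constants of Lemma~\ref{lem:thms} into $\Theta_1,\Theta_2,\Theta_3$, with $1/c=4L^f$ producing the $L^f$ prefactor in $\Theta_1$, yields the claimed $\mathcal O(1/\sqrt R)$ bound.

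Parts (iii) and (iv) are bookkeeping on top of (ii). For (iii), I would invert the bound $(\Theta_1+\Theta_2+\Theta_3)/\sqrt R \leq \epsilon$ and read off the leading dependence of each $\Theta_j$ on $L_0^{\text{imp}},n,\eta$, namely $\Theta_1=\mathcal O(L_0^{\text{imp}}\sqrt n/\eta)$ through $L^f$, $\Theta_2=\mathcal O((L_0^{\text{imp}})^2 n)$, and $\Theta_3=\mathcal O(n^2/\eta^2)$, giving the stated $R$. For (iv), an integral comparison gives $\sum_{r=0}^{R-1}\sqrt{r+1}=\mathcal O(R^{3/2})$, so $K_i=\sum_r\lceil\tau_i\sqrt{r+1}\rceil=\mathcal O(R^{3/2})$; plugging in the $R$ from (iii) yields the $\epsilon^{-3}$ bound. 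The main technical obstacle is the balancing in part~(ii): the choices $\gamma_r=c/\sqrt{r+1}$ and $H_{i,r}=\lceil\tau_i\sqrt{r+1}\rceil$ must be coordinated so that the inexactness sum $\sum_r\gamma_r\varepsilon_{i,r}$ is of the same order as $\sum_r\gamma_r^2$ and neither dominates the telescoped initial-gap term after division by $\sum_r\gamma_r\asymp\sqrt R$; otherwise, the inexact lower-level solves would degrade the $1/\sqrt R$ rate or inflate the constants in $\Theta_2,\Theta_3$.
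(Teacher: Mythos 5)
Your overall architecture matches the paper's: part (i) is Proposition~\ref{prop:SGD}(i) evaluated at $t=H_{i,r}$ (with the same caveat about uniformly bounding the initial distance $\|y_{i,0}^{r,\bullet}-y_{i,*}^{r,\bullet}\|$), part (ii) rests on the descent inequality of Lemma~\ref{lem:thms}, and parts (iii)--(iv) are the same inversion and integral-comparison bookkeeping. The gap is in the summation step of part (ii). You keep the weights $\gamma_r$ on the left-hand side, telescope, and then divide by $\sum_{r}\gamma_r \asymp c\sqrt{R}$. With $\gamma_r=c/\sqrt{r+1}$, the residual you must then divide is $c_1\sum_{r=0}^{R-1}\gamma_r^2=c_1c^2\sum_{r=0}^{R-1}\tfrac{1}{r+1}=\Theta(\log R)$, and likewise $c_2\sum_r\gamma_r\tfrac{1}{m}\sum_i\varepsilon_{i,r}=\Theta(\log R)$ since $\varepsilon_{i,r}=\mathcal O(1/\sqrt{r+1})$. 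Dividing by $c\sqrt R$ therefore yields $\mathcal O(\log R/\sqrt R)$, not the stated $(\Theta_1+\Theta_2+\Theta_3)/\sqrt R$; your claim that the residual sums are ``controlled at the same order'' as the initial-gap term is where the argument breaks.

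The paper avoids this by normalizing each round's inequality by $\gamma_r$ (it divides the $r$-th relation by $\gamma_r R/4$) before summing, so that the left-hand side becomes the unweighted average $\tfrac{1}{R}\sum_r\mathbb E[\|\nabla f^\eta(\hat x_r)\|^2]=\mathbb E[\|\nabla f^\eta(\hat x_{r^*})\|^2]$ directly, the $\gamma_r^2c_1$ term becomes $\tfrac{4\gamma_rc_1}{R}$, which sums to $\mathcal O(1/\sqrt R)$ via $\sum_r\gamma_r\le 2c\sqrt R$, and the inexactness term becomes $\tfrac{4c_2}{R}\tfrac{1}{m}\sum_i\sum_r\varepsilon_{i,r}=\mathcal O(1/\sqrt R)$ via $\sum_r 1/\sqrt{r+1}\le 2\sqrt R$. (The price is that the telescoped term becomes $\sum_r(F_r-F_{r+1})/\gamma_r$ with a varying weight, which the paper bounds by $4L^f(|\mathbb E[f^\eta(\hat x_0)]|+|f^{\eta}_{*}|)\sqrt R$.) Your complexity claims in (iii)--(iv) would survive up to logarithmic factors, but as written your route does not reproduce the bound stated in the theorem; note also that your normalization produces $\Theta_1$ with a prefactor $16L^f$ rather than $4L^f$.
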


\begin{proof}
\noindent (i) See Proposition \ref{prop:SGD}.
 
\noindent (ii) Let $\gamma_r \leq \frac{1}{4L^f}$, where $L^f\triangleq L_1^{f_1}+\tfrac{L_0^{\text{imp}}\sqrt{n}}{\eta}$. By invoking Lemma \ref{lem:thms}, and taking expectations on both sides, we obtain
\begin{align*}
\mathbb E[f^\eta(\hat x_{r+1})] &\leq \mathbb E[f^\eta(\hat x_{r})]-\tfrac{\gamma_r}{4} \mathbb E[\|\nabla f^\eta(\hat x_{r})\|^2] + \gamma_r^2(32\sqrt{2\pi} (L_0^{\text{imp}})^2 n L^f+2\sigma^2L^f)\\
&+\tfrac{3\gamma_r n^2}{4\eta^2}(L_{0,y}^f)^2\tfrac{1}{m}\textstyle\sum_{i=1}^m\varepsilon_{i,r}.
\end{align*}
Let $\gamma_r=\tfrac{(4L^f)^{-1}}{\sqrt{r+1}}$. Divide both side by $\tfrac{\gamma_r R}{4}$ and summing the relation on $r$ from $0$ to $R-1$\yq{. W}e obtain
\begin{align*}
&\tfrac{1}{R}\sum_{r=0}^{R-1}\mathbb E[\|\nabla f^\eta(\hat x_{r})\|^2] \leq \tfrac{4L^f(|\mathbb E[f^\eta(\hat x_{0})]|+|f^{\eta}_{*}|)}{\sqrt R}  + \tfrac{64\sqrt{2\pi} (L_0^{\text{imp}})^2 n +4\sigma^2}{\sqrt R}
+\tfrac{3 n^2}{\eta^2}(L_{0,y}^f)^2\tfrac{1}{m}\textstyle\sum_{i=1}^m\frac{1}{R}\textstyle\sum_{r=0}^{R-1}\varepsilon_{i,r},
\end{align*}
where we utilized $\mathbb E[f^\eta(\hat x_{R})]\geq f^{\eta}_{*}$, $a-b\leq |a|+|b|$, and $\sum_{r=0}^{R-1} \tfrac{1}{\sqrt{r+1}}\leq 2\sqrt R$ in the preceding relation, where $f^{\eta}_{*}$ denotes the infimum of $f^\eta$.

Let $H_{i,r}=\lceil\tau_i \sqrt{r+1}\rceil$, where $\tau_i\geq 1$. In view of $\varepsilon_{i,r}=\mathcal O(1/H_{i,r})$, we have
\begin{align*}
\tfrac{1}{m}\textstyle\sum_{i=1}^m\tfrac{1}{R}\textstyle\sum_{r=0}^{R-1}\varepsilon_{i,r} &= \tfrac{1}{m}\textstyle\sum_{i=1}^m\tfrac{1}{\tau_i}\tfrac{1}{R}\textstyle\sum_{r=0}^{R-1}\tfrac{1}{\sqrt{r+1}}\\
&\leq \tfrac{1}{m}\textstyle\sum_{i=1}^m\tfrac{1}{\tau_i} \tfrac{2}{\sqrt R} \leq \tfrac{2}{\sqrt R}.
\end{align*}
Therefore, from preceding relations, we obtain
\begin{align*}
\tfrac{1}{R}\textstyle\sum_{r=0}^{R-1}\mathbb E[\|\nabla f^\eta(\hat x_{r})\|^2] &\leq \tfrac{4L^f(|\mathbb E[f^\eta(\hat x_{0})]|+|f^{\eta}_{*}|)}{\sqrt R} + \tfrac{64\sqrt{2\pi} (L_0^{\text{imp}})^2 n+4\sigma^2}{\sqrt R}+\tfrac{\tfrac{6 n^2}{\eta^2}(L_{0,y}^f)^2}{\sqrt R}.
\end{align*}

\noindent (iii) Let $\mathbb E[\|\nabla f^\eta(\hat x_{r^*})\|^2]\leq \epsilon$\yq{. Then,} by the relation in (ii), we obtain the desired result. 

\noindent (iv)
Let $K_i$ be number of total iterations by client $i$. We have
$
K_i =\sum_{r=0}^{R-1} \tau_i \sqrt{r+1} \leq \tau_i\int_1^{R} \sqrt r \leq \tfrac{2\tau_i}{3} R^{3/2}.
$
Then, by the relation in (iii), we obtain the result.
\end{proof}

The next result will be employed in establishing  asymptotic convergence guarantee\fy{s for} Algorithm \ref{alg:FedStack}.
\begin{lemma}\em\label{lem:gamma:thm2}
Let $\{\gamma_r\}$ be a nonnegative, nonsummable and square-summable sequence. Further, let sequence $\{\gamma_r\tfrac{1}{m}\textstyle\sum_{i=1}^m\varepsilon_{i,r}\}$ to be nonnegative and summable. Then we have
$\sum_{r=0}^{\infty} \left(\gamma_r^2 c_1 + \gamma_r c_2\tfrac{1}{m}\textstyle\sum_{i=1}^m\varepsilon_{i,r}\right) <\infty ,$
where $c_1$ and $c_2$ are positive constants.
\end{lemma}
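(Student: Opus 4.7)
The plan is to split the infinite sum into its two components and show each is finite by directly invoking the hypotheses. First I would write
\begin{align*}
\sum_{r=0}^{\infty} \left(\gamma_r^2 c_1 + \gamma_r c_2\tfrac{1}{m}\textstyle\sum_{i=1}^m\varepsilon_{i,r}\right) = c_1\sum_{r=0}^{\infty} \gamma_r^2 + c_2 \sum_{r=0}^{\infty}\gamma_r\tfrac{1}{m}\textstyle\sum_{i=1}^m\varepsilon_{i,r},
\end{align*}
which is valid term-by-term since both summands are nonnegative (so the limit exists in $[0,\infty]$ and interchanges with the finite linear combination).

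Next, the first term is finite by the square-summability hypothesis on $\{\gamma_r\}$, i.e., $\sum_{r=0}^\infty \gamma_r^2<\infty$, so $c_1\sum_{r=0}^\infty \gamma_r^2<\infty$. The second term is finite by the assumed summability of $\{\gamma_r\tfrac{1}{m}\sum_{i=1}^m\varepsilon_{i,r}\}$, so $c_2 \sum_{r=0}^\infty \gamma_r\tfrac{1}{m}\sum_{i=1}^m \varepsilon_{i,r}<\infty$. Adding the two finite quantities yields the desired bound.

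There is no real obstacle here; the nonsummability hypothesis on $\{\gamma_r\}$ is not needed for this particular conclusion (it will be used elsewhere in the asymptotic analysis of Algorithm~\ref{alg:FedStack}, together with Lemma~\ref{lem:robsieg}). The only minor care to take is the use of nonnegativity to justify distributing the infinite sum across the two terms; this follows immediately from $\gamma_r\geq 0$, $\varepsilon_{i,r}\geq 0$, and $c_1,c_2>0$.
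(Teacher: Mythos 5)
Your proof is correct and is exactly the intended argument; the paper in fact states this lemma without proof, treating it as immediate from the hypotheses. Your observation that the nonsummability of $\{\gamma_r\}$ is not needed here (only later, in Theorem~\ref{thm:asymp} together with Lemma~\ref{lem:robsieg}) is also accurate.
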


\begin{remark}
We note that the requirements on sequences $\{\gamma_r\}$ and $\{\gamma_r\tfrac{1}{m}\textstyle\sum_{i=1}^m\varepsilon_{i,r}\}$ in Lemma \ref{lem:gamma:thm2} are indeed realistic. A simple example would be to let $\gamma_r=\tfrac{c}{(r+1)^p}$, where $p\in(\tfrac{1}{2},1]$, $c>0$; then set $\varepsilon_{i,r}=\mathcal O(\tfrac{1}{(r+1)^q})$, where $q\geq\tfrac{1}{2}$.
\end{remark}

Next, we formally present asymptotic convergence guarantee for our methods in solving \fy{problem} \eqref{prob::main}.
\begin{theorem}[Asymptotic guarantee for Algorithm \ref{alg:FedStack}]\em\label{thm:asymp}
Consider Algorithms \ref{alg:FedStack} and \ref{alg:SGD} for solving \fy{p}roblem \eqref{prob::main} \fy{and} let Assumption \ref{assump:main} hold. Let $\gamma_r\leq \tfrac{\eta}{4(\eta L_1^{f_1}+L_0^{\text{imp}}\sqrt{n})}$. Let $\{\gamma_r\}$ be a nonnegative, nonsummable and square-summable sequence. Further, let \fy{the} sequence $\{\gamma_r\tfrac{1}{m}\textstyle\sum_{i=1}^m\varepsilon_{i,r}\}$ to be nonnegative and summable. Then we have
\begin{align*}
\yq{\liminf_{r\to \infty}\|\nabla f^{\eta} (\hat x_r)\| = 0 \,\, \text{ almost surely.}}
\end{align*}
\begin{proof}
From Lemma \ref{lem:thms} and \ref{lem:gamma:thm2}, we have
\begin{align*}
\mathbb E[f^\eta(\hat x_{r+1})-f^{\eta}_*\mid \mathcal F_r] &\leq f^\eta(\hat x_{r})-f^{\eta}_*-\tfrac{\gamma_r}{4} \|\nabla f^\eta(\hat x_{r})\|^2 + \gamma_r^2 c_1 + \gamma_r c_2\tfrac{1}{m}\textstyle\sum_{i=1}^m\varepsilon_{i,r},
\end{align*}
and
$
\sum_{r=0}^{\infty} \left(\gamma_r^2 c_1 + \gamma_r c_2\tfrac{1}{m}\textstyle\sum_{i=1}^m\varepsilon_{i,r}\right) <\infty ,
$
where $c_1=(32\sqrt{2\pi} L_0^{\text{imp}})^2 nL^f +2\sigma^2L^f$, $c_2=\tfrac{3n^2}{4\eta^2}(L_{0,y}^f)^2$ and $f^{\eta}_*\triangleq \inf_{\fy{x}} f^{\eta}(x)$. Let $X_r=\tfrac{\gamma_r}{4} \|\nabla f^\eta(\hat x_{r})\|^2$, $Y_r=f^\eta(\hat x_{r})-f^{\eta}_*$, and $Z_r=\gamma_r^2 c_1 + \gamma_r c_2\tfrac{1}{m}\textstyle\sum_{i=1}^m\varepsilon_{i,r}$. \yqr{Invoking} Lemma \ref{lem:robsieg}, \yqr{we} obtain
$
\sum_{r=0}^{\infty} \gamma_r \|\nabla f^\eta(\hat x_{r})\|^2 <\infty
$ \fy{almost surely.}
\yq{Since $\sum_{r=0}^{\infty} \gamma_r =\infty$, we have $\displaystyle\liminf_{r\to \infty}\|\nabla f^{\eta} (\hat x_r)\| = 0$}. 
\end{proof}
\end{theorem}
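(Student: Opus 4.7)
The plan is to combine the per-round descent inequality from Lemma~\ref{lem:thms} with the Robbins-Siegmund theorem (Lemma~\ref{lem:robsieg}) to obtain a summability statement on the (weighted) squared gradient norms, and then leverage the nonsummability of $\{\gamma_r\}$ to extract the $\liminf$ conclusion. First, I would set $f^{\eta}_* \triangleq \inf_x f^{\eta}(x)$ (which is finite since $f^{\eta}$ is bounded below whenever $f$ is, by the definition of the randomized smoothing) and subtract it from both sides of the inequality in Lemma~\ref{lem:thms} so that $Y_r \triangleq f^{\eta}(\hat x_r) - f^{\eta}_*$ is a nonnegative, $\mathcal{F}_r$-measurable quantity.

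Next, I would identify the Robbins-Siegmund quantities as $X_r \triangleq \tfrac{\gamma_r}{4}\|\nabla f^{\eta}(\hat x_r)\|^2$, $Z_r \triangleq \gamma_r^2 c_1 + \gamma_r c_2 \tfrac{1}{m}\sum_{i=1}^m \varepsilon_{i,r}$, and $\alpha_r \equiv 0$. The hypothesis $\mathbb{E}[Y_{r+1} \mid \mathcal{F}_r] \leq Y_r - X_r + Z_r$ then follows directly from Lemma~\ref{lem:thms}, while the summability $\sum_{r=0}^{\infty} Z_r < \infty$ is exactly the content of Lemma~\ref{lem:gamma:thm2} under the assumed stepsize/inexactness schedules. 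Applying Lemma~\ref{lem:robsieg} therefore yields both that $\{Y_r\}$ converges almost surely and, crucially,
\begin{align*}
\sum_{r=0}^{\infty} \gamma_r \|\nabla f^{\eta}(\hat x_r)\|^2 < \infty \quad \text{almost surely.}
\end{align*}

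The final step is to convert this summability into the desired $\liminf$ statement. The idea is a contradiction argument: on any sample path where the sum above is finite, if $\liminf_{r \to \infty}\|\nabla f^{\eta}(\hat x_r)\|^2 \geq \epsilon$ for some $\epsilon > 0$, then there exists $r_0$ such that $\|\nabla f^{\eta}(\hat x_r)\|^2 \geq \epsilon/2$ for all $r \geq r_0$, and hence $\sum_{r \geq r_0} \gamma_r \|\nabla f^{\eta}(\hat x_r)\|^2 \geq (\epsilon/2)\sum_{r \geq r_0}\gamma_r = \infty$ by nonsummability of $\{\gamma_r\}$, contradicting the Robbins-Siegmund conclusion. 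Hence $\liminf_{r\to\infty}\|\nabla f^{\eta}(\hat x_r)\| = 0$ almost surely.

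The main obstacle is really just verifying cleanly that the hypotheses of Robbins-Siegmund apply in the proper measurability framework (the history $\mathcal{F}_r$ has been carefully specified in Definition~\ref{def:F:Alg} precisely to make this step routine), together with the observation that a $\liminf$-type rather than a $\lim$-type statement is all one can hope to extract from the gradient-norm summability without imposing additional continuity or uniform-bound conditions. Strengthening this conclusion to a full $\lim$ would typically require a boundedness-of-iterates argument and continuity of $\nabla f^{\eta}$, which are outside the scope of the current assumptions.
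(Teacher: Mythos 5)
Your proposal is correct and follows essentially the same route as the paper's proof: subtracting $f^{\eta}_*$ from the descent inequality of Lemma~\ref{lem:thms}, invoking Lemma~\ref{lem:gamma:thm2} for the summability of the error terms, applying Robbins--Siegmund to get $\sum_{r}\gamma_r\|\nabla f^{\eta}(\hat x_r)\|^2<\infty$ almost surely, and then using nonsummability of $\{\gamma_r\}$ to extract the $\liminf$ conclusion. The only difference is that you spell out the final contradiction argument explicitly, which the paper leaves as a one-line remark.
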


\begin{remark}
Theorem \ref{thm:main} is equipped with guarantees \yqr{for} convergence rate, communication and iteration complexity. In (ii), we set $H_{i,r}=\mathcal O(\sqrt{r+1})$ in the analysis, but in practice, \yqr{one may} choose $H_{i,r}$ such that the sum of all the \fy{inexact errors, i.e.,} $\sum_{r=0}^{R-1} \varepsilon_{i,r}\leq \mathcal O(\sqrt{R})$, \fy{leading to the convergence rate of} $\mathcal O(1/\sqrt{R})$. We note that the rate is given in terms of communication rounds instead of iteration number. This is indeed because we \yqr{employed} an implicit programming approach, and the zeroth-\fy{order} gradient steps on the implicit function only \yqr{occur} at \fy{global steps}. Theorem \ref{thm:asymp} provides \fy{an} asymptotic guarantee, which appears to be \yqr{among the first} in FL when the aggregation function is nonconvex and nonsmooth, \fy{extending the results in centralized settings in \cite{marrinan2023zeroth}}. 
\end{remark}

\section{Numerical Experiments} 
We consider image classification tasks with MNIST, CIFAR-10, and Fashion-MNIST dataset. Throughout the experiment\fy{s}, we simulate the non-iid setting with Dirichlet distribution $Dir(\alpha)$, which is commonly used in FL experiments (e.g. \cite{li2022federated}). $\alpha$ is the concentration parameter that is used to determine the non-iid level. We consider $\alpha=0.1$ for non-iid settings and $\alpha\to \infty$ for iid settings. \yq{We note that when $\alpha$ is small (e.g., $\alpha\leq 1$), all clients only possess a small subset of all classes, making the data distribution among clients nonidentical.} Fig. \ref{fig:vis:noniid} \yqr{provides a visual illustration of} the non-iid data distribution in the experiments. 
\begin{figure}[htbp]
  \centering
  \includegraphics[width=0.45\textwidth]{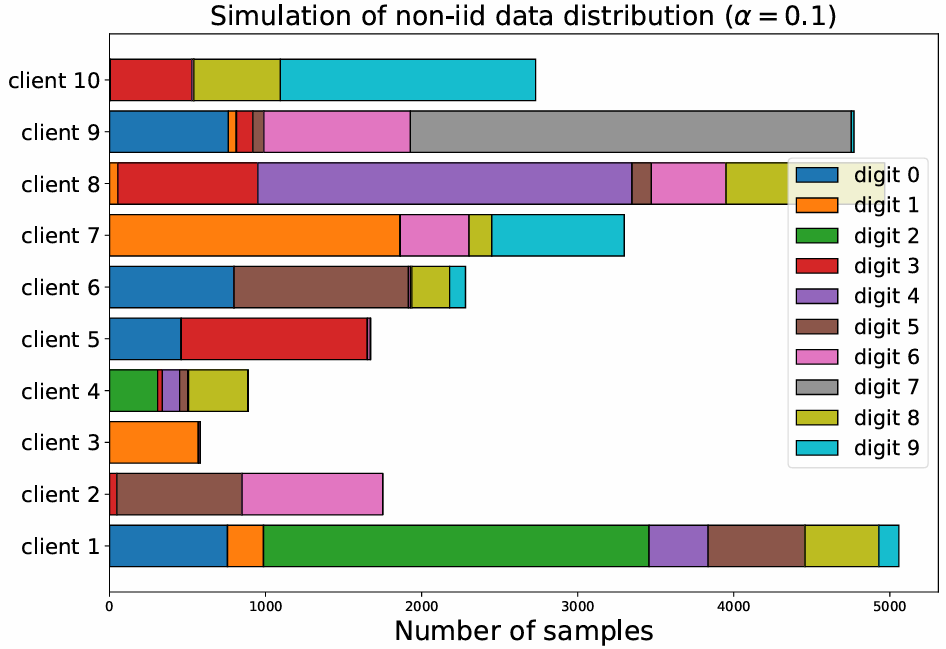}
  \caption{Non-iid data simulation across clients with MNIST.}
  \label{fig:vis:noniid}
\end{figure}

We consider cross-entropy loss, where $f_1(x)=-\tfrac{1}{N_s}\sum_{j=1}^{N_s} \sum_{c=1}^C I_{jc} \log (p_{jc}^x)$, $f_2(x)=\tfrac{\lambda}{2}\sum_{i=1}^m \rho_i \|x-y_i(x)\|^2$, $y_i(x) = \text{arg}\min_{y_i}  -\tfrac{1}{N_i}\sum_{j=1}^{N_i} \sum_{c=1}^C I_{jc} \log (p_{jc}^{y_i}) + \tfrac{\mu}{2}\|x-y_i\|^2$. Let $N_s$ \yqr{and} $N_i$ denote the number of data samples assigned to the server-agent and client-agent $i$, respectfully, $C$ is the number of classes, $I_{jc}=1$ if sample $u_j$ belongs to class $c$, else $I_{jc}=0$. $p_{jc}^x=e^{u_j^Tx_c}/\sum_{h=1}^Ce^{u_j^Tx_h}$, $p_{jc}^{y_i}=e^{u_j^Ty_c^i}/\sum_{h=1}^Ce^{u_j^Ty_h^i}$, and $\rho_i=N_i/N_{tr}$. $\lambda$ and $\mu$ are positive scalars, $x=[x_c]_{c=1}^C\in \mathbb R^{n\times C}$ and $y_i=[y_c^i]_{c=1}^C\in \mathbb R^{n\times C}$.

Throughout the experiments, we set $\gamma_r=\tfrac{0.01}{\sqrt{r+1}}$, $\tilde \gamma_{i,t}=\tfrac{0.1}{t+1}$, and $\eta=0.1$. The data is distributed as follows: first we split the data into $90\%$ training and $10\%$ testing data. \yqr{T}hen we assign $30\%$ of the training data to the server-agent and the rest to client-agent\yqr{s} with Dirichlet distribution. We simulate the straggler effect by sampling a subset $S_r \subset[m]$ during communication round $r$; we note that in our scheme, \yqr{setting} $H_{i,r}=0$ for some $r$ have a similar effect. We consider $m=10$ client-agents in the experiments.

\noindent {\bf (i) Convergence behavior.} We show the convergence behavior of Algorithm \ref{alg:FedStack} in terms of \yqr{the} global loss in Fig.~\ref{fig:Noniid_loss}. We denote by $\beta$ the proportion of participating clients in each communication round. We set the number of local steps \yqr{$H_{i,r} = \lceil\tau \sqrt{r+1}\rceil$} to match with \fy{our} theory, and choose $\tau \in \{5,20,50\}$. We consider three settings with \fy{an} increasing level of heterogeneity (in terms of both data and system), each setting is assigned with a pair of $(\alpha,\beta)$: $(1000,90\%)$ represents a homogeneous setting; $(1,50\%)$ represents a moderately heterogeneous setting; and $(0.1,10\%)$ represents an extreme heterogeneous setting. We observe that the global loss converge\yqr{s} faster with a larger number of local steps under homogeneous and moderately heterogeneous environments. However, under extreme heterogeneity, local steps \yqr{are} not contributing to faster convergence. This is \yqr{because} only \fy{one} client communicates per round, more local steps \fy{may lead} to a biased solution towards that client. This can be handled by carefully \fy{increasing} the value of $\mu$, so that the local models remain closer to the global model.

\begin{figure*}[ht]
\centering
\begin{tikzpicture}
\node at (0, 0) [rotate=90] {\textbf{Fashion-MNIST}};
\node[anchor=west] (fig1) at (0.5, 0) {\includegraphics[width=0.7\textwidth]{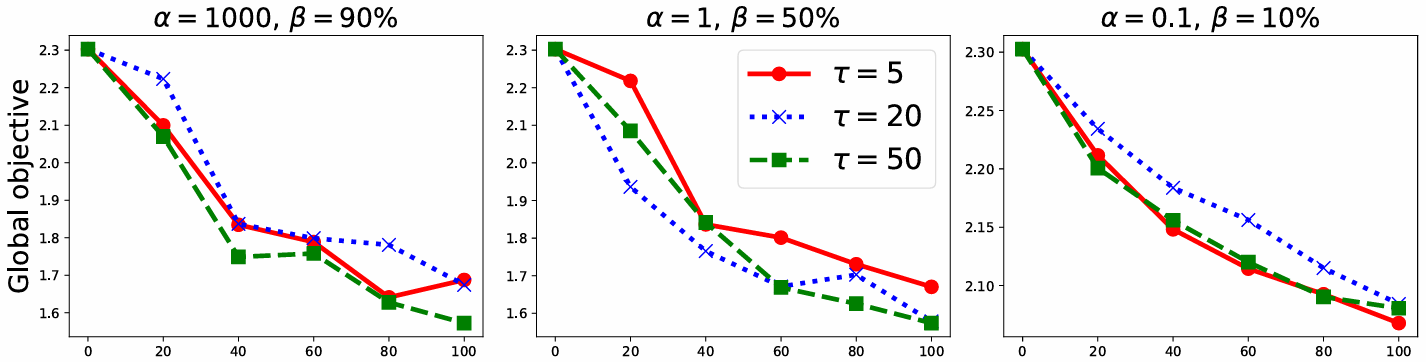}};

\node at (0, -3) [rotate=90] {\textbf{CIFAR-10}};
\node[anchor=west] (fig2) at (0.5, -3) {\includegraphics[width=0.7\textwidth]{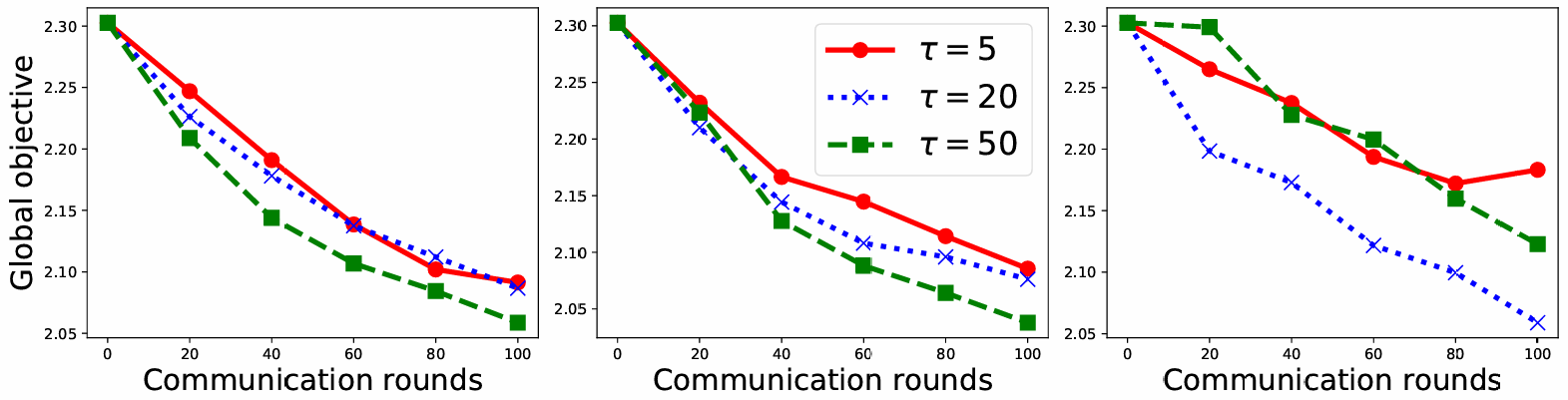}};
\end{tikzpicture}
\caption{Comparison of different $\tau$ under increasing heterogeneity with Fashion-MNIST and CIFAR-10.}
\label{fig:Noniid_loss}
\end{figure*}

\noindent {\bf (ii) Testing accuracy.} We compare the testing accuracy of our method in Algorithm \ref{alg:FedStack} with \yqr{that of} FedAvg \cite{mcmahan2017communication}, SCAFFOLD \cite{karimireddy2020scaffold}, and FedProx \cite{li2020federated}. We set $\tau=20$, and for fair comparison, we set the total number of local steps to be the same across the four methods. We set $R=500$ in this experiment. Similar to the previous experiment, we consider \yqr{three} combinations of $(\alpha,\beta)$. The test accuracy \yqr{results are presented} in Table~\ref{tbl:test:acc2}. We observe that {\bf ZO-HFL} demonstrates robust performance under various settings and \yqr{achieves} the best test accuracy in five out \fy{of} six heterogeneous environments, and stays competitive in the homogeneous setting.


\begin{table*}[ht]
\caption{Test accuracy on the {\it global model} with increasing heterogeneity level}
\centering
{\tiny
\begin{tabular}{|c|ccc|ccc|ccc|}
\hline
\multirow{2}{*}{} & \multicolumn{3}{c|}{$\alpha=1000,$ $\beta = 90\%$}                                & \multicolumn{3}{c|}{$\alpha=1,$ $\beta=50\%$}                                             & \multicolumn{3}{c|}{$\alpha=0.1,$ $\beta=10\%$}                                                 \\ \cline{2-10} 
                  & \multicolumn{1}{c|}{MNIST}     & \multicolumn{1}{c|}{Fa.-MNIST} & CIFAR-10  & \multicolumn{1}{c|}{MNIST}     & \multicolumn{1}{c|}{Fa.-MNIST}   & CIFAR-10        & \multicolumn{1}{c|}{MNIST}           & \multicolumn{1}{c|}{Fa.-MNIST}   & CIFAR-10        \\ \hline
FedAvg \cite{mcmahan2017communication}    & \multicolumn{1}{c|}{$87.17\%$} & \multicolumn{1}{c|}{$77.52\%$}     & $68.36\%$ & \multicolumn{1}{c|}{$77.33\%$} & \multicolumn{1}{c|}{$59.64\%$}       & $37.20\%$       & \multicolumn{1}{c|}{$39.19\%$}       & \multicolumn{1}{c|}{$45.50\%$}       & $27.43\%$       \\ \hline
FedProx \cite{li2020federated}   & \multicolumn{1}{c|}{$86.90\%$} & \multicolumn{1}{c|}{$77.34\%$}     & $67.52\%$ & \multicolumn{1}{c|}{$75.61\%$} & \multicolumn{1}{c|}{$60.28\%$}       & $38.55\%$       & \multicolumn{1}{c|}{$45.15\%$}       & \multicolumn{1}{c|}{$49.44\%$}       & $29.03\%$       \\ \hline
SCAFFOLD \cite{karimireddy2020scaffold}  & \multicolumn{1}{c|}{$91.54\%$} & \multicolumn{1}{c|}{$82.25\%$}     & $72.36\%$ & \multicolumn{1}{c|}{$91.25\%$} & \multicolumn{1}{c|}{$83.48\%$}       & $50.41\%$       & \multicolumn{1}{c|}{$87.36\%$}       & \multicolumn{1}{c|}{$74.91\%$}       & $44.65\%$       \\ \hline
{\bf ZO-HFL}      & \multicolumn{1}{c|}{$90.82\%$} & \multicolumn{1}{c|}{$78.51\%$}     & $68.12\%$ & \multicolumn{1}{c|}{$88.44\%$} & \multicolumn{1}{c|}{${\bf 85.51\%}$} &  ${\bf 58.06\%}$ & \multicolumn{1}{c|}{${\bf 87.70\%}$} & \multicolumn{1}{c|}{ ${\bf 76.86\%}$} & ${\bf 52.51\%}$ \\ \hline
\end{tabular}}
\label{tbl:test:acc2}
\end{table*}

\section{Concluding Remarks}
We introduce a novel hierarchical optimization framework explicitly designed to address heterogeneity in federated learning. We design and analyze a zeroth-order implicit federated algorithm, {\bf ZO-HFL}, equipped with both asymptotic and nonasymptotic convergence, iteration and communication complexity guarantees. Our approach enhances traditional FL methodologies by enabling both global and personalized training. This structure not only maintains a global model at the server level, but also facilitates personalized adaptations at the client level. It also allows for nonidentical number of local steps among clients at each communication round, providing significant flexibility compared with standard FL methods for resolving system heterogeneity. 

\section{Appendix}
\begin{lemma}[Randomized spherical smoothing]\label{lem:SphericalSmooth}\em      
Let $h:\mathbb{R}^n\to \mathbb{R}$ be a given continuous function and define $h^{\eta}(x)\triangleq \mathbb{E}_{u \in \mathbb{B}} \left[ h(x+ \eta u)  \right].$ Then, the following hold. 

\noindent (i) $h^\eta$ is continuously differentiable and $
\nabla h^\eta(x) = \tfrac{n}{\eta} \mathbb{E}_{v \in \mathbb{S}}  \left[  h(x+ \eta v)v \right]$ for any $x\in \mathbb{R}^n$, and $
\nabla h^\eta(x) =\tfrac{n}{2\eta}  \mathbb{E}_{v \in \mathbb{S}}  \left[  (h(x+\eta v)-  h(x-\eta v)) v  \right]$.

Suppose $h$ is Lipschitz continuous with parameter $L_0>0$. Then, the following statements hold.

\noindent (ii) $| h^{\eta}(x) -  h^{\eta}(y)| \leq L_0 \|x-y \|$ for all $x,y\in \mathbb{R}^n$; 

\noindent (iii) $| h^{\eta}(x) -  h(x)| \leq L_0\eta$ for all $x\in \mathbb{R}^n$; 

\noindent (iv) $\| \nabla h^{\eta}(x) - \nabla h^{\eta} (y)\| \leq \frac{L_0 \sqrt{n}}{\eta}  \|x-y \|$ for all $x,y\in \mathbb{R}^n$, $c>0$ is a constant.
\end{lemma}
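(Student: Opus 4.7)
The plan is to prove the four statements in order, using a divergence-theorem/Stokes argument for part (i) and then deriving (ii)--(iv) either directly from the definition or from the integral representation established in (i).

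For part (i), I would first rewrite the expectation as a volume integral: $h^\eta(x) = \frac{1}{\mathrm{vol}(\eta\mathbb{B})}\int_{x+\eta\mathbb{B}} h(w)\,dw$. Differentiating in $x$ via the divergence theorem (equivalently, Leibniz's rule on a moving domain) converts the volume integral into a boundary integral $\int_{x+\eta\mathbb{S}} h(w)\,\nu(w)\,dS(w)$, where $\nu(w)=(w-x)/\eta$ is the outward unit normal. The substitution $v=(w-x)/\eta$ rescales the surface measure by $\eta^{n-1}$, and the classical identity $\mathrm{area}(\mathbb{S})/\mathrm{vol}(\mathbb{B})=n$ produces the factor $n/\eta$, yielding $\nabla h^\eta(x) = \tfrac{n}{\eta}\mathbb{E}_{v\in\mathbb{S}}[h(x+\eta v)v]$. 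Continuity of $\nabla h^\eta$ then follows from continuity of $h$ together with dominated convergence. For the symmetric form, I would use that $v\mapsto -v$ is measure-preserving on $\mathbb{S}$, so $\mathbb{E}_{v\in\mathbb{S}}[h(x-\eta v)v] = -\mathbb{E}_{v\in\mathbb{S}}[h(x+\eta v)v]$, and averaging the two equivalent expressions for $\nabla h^\eta(x)$ produces the symmetric-difference representation.

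Parts (ii) and (iii) are straightforward consequences of the definition and Jensen's inequality: for (ii), $|h^\eta(x)-h^\eta(y)| \leq \mathbb{E}_{u\in\mathbb{B}}|h(x+\eta u)-h(y+\eta u)| \leq L_0\|x-y\|$, and for (iii), $|h^\eta(x)-h(x)| \leq \mathbb{E}_{u\in\mathbb{B}}|h(x+\eta u)-h(x)| \leq L_0\eta\,\mathbb{E}_{u\in\mathbb{B}}\|u\| \leq L_0\eta$.

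The main obstacle is part (iv), specifically sharpening the naive factor of $n$ down to $\sqrt{n}$. Starting from the symmetric representation in (i), differencing gives $\nabla h^\eta(x)-\nabla h^\eta(y) = \tfrac{n}{\eta}\mathbb{E}_{v\in\mathbb{S}}[(h(x+\eta v)-h(y+\eta v))v]$. A direct bound via $\|v\|=1$ and Lipschitz continuity of $h$ would only yield the constant $nL_0/\eta$. To obtain $\sqrt{n}L_0/\eta$, I would instead test against an arbitrary unit vector $w\in\mathbb{R}^n$ and write $\langle w,\nabla h^\eta(x)-\nabla h^\eta(y)\rangle = \tfrac{n}{\eta}\mathbb{E}_{v\in\mathbb{S}}[(h(x+\eta v)-h(y+\eta v))\langle w,v\rangle]$, then apply Cauchy--Schwarz together with the key identity $\mathbb{E}_{v\in\mathbb{S}}[\langle w,v\rangle^2]=1/n$ for any unit vector $w$ (a consequence of rotational symmetry of the uniform law on $\mathbb{S}$). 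Lipschitz continuity bounds the first factor by $L_0\|x-y\|$, and the second factor contributes $1/\sqrt{n}$, so the product is $\tfrac{n}{\eta}\cdot L_0\|x-y\|\cdot\tfrac{1}{\sqrt{n}} = \tfrac{L_0\sqrt{n}}{\eta}\|x-y\|$; taking the supremum over unit $w$ delivers the desired Lipschitz-gradient bound.
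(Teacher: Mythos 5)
Your proposal is correct, and for part (iv) it takes a genuinely different route from the paper. For (i)--(iii) the two essentially coincide: the paper defers to \cite[Lemma~1]{cui2023complexity} for the boundary-integral identity and the Lipschitz/approximation bounds, and obtains the symmetric-difference form from the sign-symmetry of the uniform law on $\mathbb{S}$ exactly as you do; your divergence-theorem/Leibniz derivation and the Jensen arguments are just the standard proofs of those cited facts. The real divergence is in (iv). The paper imports from \cite[Lemma~8]{YNS_Automatica12} the bound $\|\nabla h^{\eta}(x)-\nabla h^{\eta}(y)\|\le \tfrac{C_n L_0}{\eta}\|x-y\|$ with $C_n$ a ratio of double factorials, identifies $C_n=1/w_n$ with $w_n$ the Wallis integral, and then uses $(n+1)w_nw_{n+1}=\tfrac{\pi}{2}$ together with monotonicity of $w_n$ to conclude $C_n<\sqrt{3/\pi}\,\sqrt{n}<\sqrt{n}$. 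You instead dualize: test $\nabla h^{\eta}(x)-\nabla h^{\eta}(y)=\tfrac{n}{\eta}\mathbb{E}_{v}[(h(x+\eta v)-h(y+\eta v))v]$ against a unit vector $w$, apply Cauchy--Schwarz, and use the second-moment identity $\mathbb{E}_{v\in\mathbb{S}}[\langle w,v\rangle^{2}]=w^{\top}\mathbb{E}[vv^{\top}]w=\tfrac{1}{n}$, which gives $\tfrac{n}{\eta}\cdot L_0\|x-y\|\cdot\tfrac{1}{\sqrt{n}}=\tfrac{L_0\sqrt{n}}{\eta}\|x-y\|$ directly. This is valid (the Lipschitz bound on $h(x+\eta v)-h(y+\eta v)$ holds pointwise in $v$, so it survives the $L^2$ average) and is self-contained, avoiding both the external lemma and the double-factorial bookkeeping; the only price is a marginally larger constant ($1$ in place of the paper's $\sqrt{3/\pi}\approx 0.98$ in front of $\sqrt{n}$), which is immaterial to the statement. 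The sharper constant in the paper's route ultimately comes from using the first absolute moment $\mathbb{E}_v[|\langle w,v\rangle|]$ rather than the second moment, but either suffices for the claimed $L_0\sqrt{n}/\eta$.
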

\begin{proof}
(i) From Lemma 1 in \cite{cui2023complexity}, we have $\nabla h^\eta(x) = \tfrac{n}{\eta}\mathbb{E}_{v \in \mathbb{ S}}  \left[  h(x+\eta v)  v  \right]$ for any $x\in \mathbb{R}^n$. By the symmetric property of the distribution of $v$, we have $\mathbb{E}_{v \in \mathbb{ S}}  \left[  h(x+\eta v) v  \right]=\mathbb{E}_{v \in \mathbb{ S}}  \left[  h(x-\eta v) (-v) \right]$. Therefore, we have $\tfrac{n}{2\eta}\mathbb{E}_{v}  \left[(h(x+ \eta v)-h(x-\eta v)) v \right]=\tfrac{n}{2\eta}\mathbb{E}_{v}  \left[  h(x+\eta v) v \right]+\tfrac{n}{2\eta}\mathbb{E}_{v}  \left[h(x-\eta v)(-v)  \right]=\tfrac{n}{\eta}\mathbb{E}_{v}  \left[  h(x+ v)v  \right] = \nabla h^\eta(x)$.

(ii, iii) See Lemma 1 in \cite{cui2023complexity}.

(iv) From  \cite[Lemma 8]{YNS_Automatica12}, we have $\| \nabla h^{\eta}(x) - \nabla h^{\eta} (y)\| \leq \frac{C_nL_0}{\eta}  \|x-y \|$, where $C_n=\tfrac{2}{\pi}\tfrac{n!!}{(n-1)!!}$ if $n$ is even, and $C_n=\tfrac{n!!}{(n-1)!!}$ if $n$ is odd. It remains to show that $C_n\leq \sqrt n$.

Let $w_n=\int_0^{\pi/2}\text{sin}^n(x)dx$ denotes the Wallis's integral, we have two properties: (i) $w_n=\tfrac{n-1}{n}w_{n-2}$ for $n\geq 2$, where we define $w_0=\tfrac{\pi}{2}$ and $w_1=1$; (ii) $w_{n+1}<w_n$ \cite{spivak2006calculus}. Then, by simple induction, we have $w_n = \tfrac{\pi}{2}\tfrac{(n-1)!!}{n!!}$ for all even $n$, and $w_n = \tfrac{(n-1)!!}{n!!}$ for all odd $n$. Therefore, we have $C_n=\tfrac{1}{w_n}$.

Next, by noting that $(n+1)w_nw_{n+1}=(n+1)\tfrac{\pi}{2}\tfrac{(n-1)!!}{n!!}\tfrac{n!!}{(n+1)!!}=\tfrac{\pi}{2}$, and by property (ii), we have $w_n > \sqrt{\tfrac{\pi}{2(n+1)}} = \sqrt{\tfrac{\pi n}{2(n+1)}}\tfrac{1}{\sqrt n} \geq \sqrt{\tfrac{\pi }{3}}\tfrac{1}{\sqrt n}$, for $n\geq 2$.

Therefore, we obtain $C_n=\tfrac{1}{w_n}<\sqrt{\tfrac{3}{\pi}}\sqrt n < \sqrt n$.
\end{proof}

\noindent \fy{\bf An example of nonsmooth nonconvex implicit function.} Consider a bilevel optimization problem given as
\begin{align}
\begin{aligned}
&\min_{x\in \mathbb R^n} \, \tfrac{1}{2}\|x+{\bf 1}_n -y(x)\|^2, \\
&\,\, \text{s.t. }y(x)=\text{arg min}_{y\in \mathbb R_{+}^n} \|y-x\|^2.
\end{aligned}
\end{align}
Let $f(\bullet)=\tfrac{1}{2}\|\bullet+{\bf 1}_n -y(\bullet)\|^2$ denotes the implicit function, where ${\bf 1}_n$ denotes an n-dimensional vector with all elements equal to $1$.


Fig. \ref{fig:NDNCBiLv} shows function $f(x)$. Notably, it is nondifferentiable at points where at least one element $x_i=0$.
\begin{figure}[htbp]
  \centering
  \includegraphics[width=0.45\textwidth]{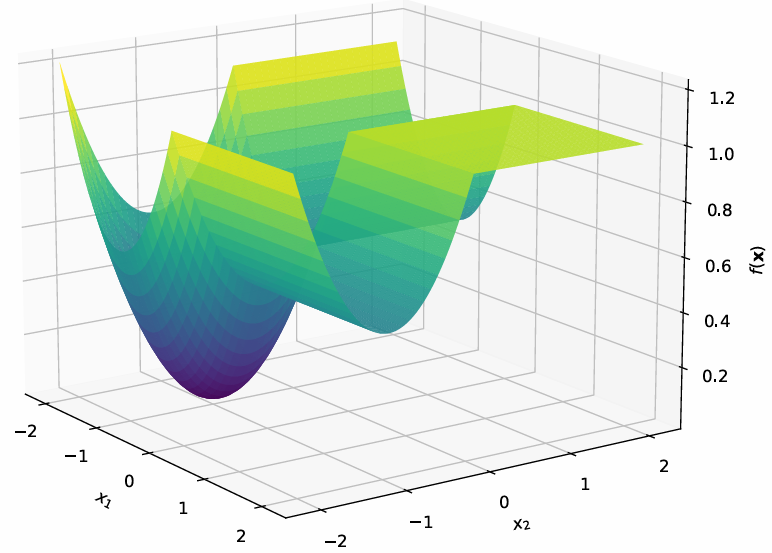}
  \caption{The implicit function $f(x)$ when $n=2$.}
  \label{fig:NDNCBiLv}
\end{figure} 

\bibliographystyle{siam}
\bibliography{ref}

\end{document}